\newtheorem{theorem}{Theorem}[section]
\newtheorem{proposition}{Proposition}[section]
\newtheorem{lemma}{Lemma}[section]
\newtheorem{corollary}{Corollary}[section]
\newcommand{\cF}{{\mathcal F}}
\newcommand{\cI}{{\mathcal I}}
\newcommand{\cL}{{\mathcal L}}
\newcommand{\cP}{{\mathcal P}}
\newcommand{\cW}{{\mathcal W}}
\newcommand{\cZ}{{\mathcal Z}}
\newcommand{\te}{{\theta}}
\newcommand{\Om}{{\Omega}}
\newcommand{\om}{{\omega}}
\newcommand{\Sig}{{\Sigma}}
\newcommand{\al}{{\alpha}}
\newcommand{\ka}{{\kappa}}
\newcommand{\la}{{\lambda}}
\newcommand{\La}{{\Lambda}}
\newcommand{\bbN}{{\mathbb N}}
\newcommand{\bbR}{{\mathbb R}}
\newcommand{\with}{:\;}
\newcommand{\1}{\mathbf{1}}
\begin{document}

~\vspace{0.7cm}

\begin{flushleft}
{\Large \textbf{Coupling methods for random topological Markov chains}}\\[0.5cm]

\bigskip

\begin{minipage}{.07\textwidth}
$\phantom{a}$
\end{minipage}
\begin{minipage}{.9\textwidth}
\textbf{Manuel Stadlbauer}\\
{\footnotesize Departamento de Matemática, Universidade Federal da Bahia, Av. Ademar de Barros s/n, 40170--110 Salvador, BA, Brasil. Email
manuel.stadlbauer@ufba.br}

\bigskip
\textbf{Abstract.} 
We apply coupling techniques in order to prove that the transfer operators associated with random topological Markov chains and non-stationary shift spaces with the big images and preimages-property have a spectral gap.

\bigskip

\footnotesize{\textbf{Dezember 1,  2014.} \\
\textbf{Keywords:} Random countable Markov shift; Ruelle-Perron-Frobenius theorem; big images and preimages; Vasershtein metric.\\ \textbf{2010 MSC:} 37D35, 37H99.}
\end{minipage}

\end{flushleft}

\bigskip


\begin{abstract}
We apply coupling techniques in order to prove that the transfer operators associated with random topological Markov chains and non-stationary shift spaces with the big images and preimages-property have a spectral gap. 
\end{abstract}

\section{Introduction} In this note we give a further contribution to thermodynamical formalism for random countable Markov chains by applying coupling techniques and methods from optimal transport in order to circumvent problems arising from the randomized setting. In case of a  random topological Markov chain with finitely many states, it was shown by  Bogenschütz-Gundlach (\cite{BogenschutzGundlach:1995}) and Kifer (\cite{Kifer:2008}) that the method of convex cones can be adapted to the random setting in order to obtain exponential convergence to the eigenfunction of the Ruelle operator. However, even in case of deterministic topological Markov chains with countably many states, these cone techniques are not applicable and have to be replaced by proving quasi-compactness of Ruelle's operator using the method developed by Doeblin-Fortet, Ionescu Tulcea-Marinescu and Hennion (\cite{DoeblinFortet:1937,TulceaMarinescu:1950,Hennion:1993}) which is, at least so far, unavailable in the setting of random dynamical systems. In fact, basic examples without measurable families of invariant functions as in \cite[Prop. 1]{EvstigneevPirogov:2010} suggest that there might be no randomized version of the Doeblin-Fortet method. Therefore, we employ coupling techniques from the theory of Markov operators and optimal transport. It is worth noting that this approach  does not require a Lasota-Yorke condition (as, e.g., by Buzzi in \cite{Buzzi:1999}).

 In here, the setting with respect to random topological Markov chains follows closely the one in \cite{Stadlbauer:2010}, which we sketch now, postponing the details to section \ref{sec:2}. A random topological Markov chain is a random bundle transformation, that is, a commuting diagram (or fibered system)   
\[ 
 \begin{CD}
  X @>T>> X \\
  @V{\pi}VV  @VV{\pi}V \\
  \Om @>\te>> \Om, \\
 \end{CD}
\]
where $\te$ is an ergodic automorphism of the abstract probability space $(\Om,P)$ and $\pi$ is onto and measurable. In case of a random topological Markov chain, $X$ is a subset of $\bbN^\bbN\times \Om$ and each fibre $X_\om := \pi^{-1}(\{ \om \})$ has the following topological Markov structure: For a. e. $\om$, there is an associated alphabet $\cW^1_\om$ and a matrix $A_\om = (\al_{ij}^\om: i \in \cW^1_\om,j \in \cW^1_{\te\om})$, called the (random) transition matrix. Then, a pair $uv \in \cW^1_\om \times \cW^1_{\te\om}$ is called $\om$-\emph{admissible} if $ \al_{uv}^{\omega}=1$. Moreover, $(x_0, x_1,x_2 \ldots) \in X_\om$ if and only if $x_{i} \in \cW^1_{\te^i\om}$ and  $x_{i}x_{i+1}$ is $\te^i\om$-admissible for all $i= 0,1,\ldots $. 
Note that $X_\om$ comes with a natural, non-random topology defined through the shift metric on sequence spaces as defined below. With respect to this topology, $T_\om$ is uniformly expanding, $X_\om$ is closed in $\bbN^\bbN$ but in general neither compact nor locally compact.

Now assume that $\varphi:X \to \bbR$ is a measurable function such that  $\varphi_\om := \varphi \arrowvert_{X_\om}:X_\om \to \bbR$ is locally Hölder continuous almost surely. The Ruelle operator associated to $\varphi$ is then defined by, for a.e. $\om$ and $x \in X_{\te \om}$,
\[ \cL_\om(f)(x) := \sum_{y \in X_\om: T_\om(y)=x} e^{\varphi_\om (y)}f(y), \]
where $f:X_\om \to \bbR$ is in a suitable function space such that $\cL_\om$ is well-defined. 
In \cite{Stadlbauer:2010}, the notion of \emph{(random) big images and preimages} was introduced in order to have a sufficient condition for a random Ruelle's theorem at hand. This condition, inspired by the one of Sarig in \cite{Sarig:2003a}, requires that there exist subsets $\Om_{\textrm{\tiny bi}}$ and $\Om_{\textrm{\tiny bp}}$ of positive measure in the base $\Om$ and a finite subset $\cI$  of $\bbN$ such that
\begin{enumerate}
  \item for  $\te\om \in \Om_{\textrm{\tiny bi}}$ and $u \in \cW^1_{\om}$, there exists $b \in \cI$ such that $ub$ is $\om$-admissible, 
  \item for  $\te\om \in \Om_{\textrm{\tiny bp}}$ and  $u \in \cW^1_{\te\om}$, there exists $b \in \cI$ such that $bu$ is $\om$-admissible.
\end{enumerate}
The basic example for a system with this property is a random topological Markov chain such that all entries of $A_\om$ are equal to one for $\om$ in a subset  $\Om^\ast$ of $\Om$ of positive measure. In this situation, one might think of $(X,T)$ as a random topological Markov chain which is a full shift with positive probability.

The main result of \cite{Stadlbauer:2010}, Theorem 4.7 in there, is the following version of Ruelle's theorem under the assumptions of local Hölder continuity of $\varphi_\om$ and big images and preimages: There exist a random variable $\{\lambda_\om \}$, a random function $\{h_\om\}$ and a random measure $\{\mu_\om\}$, such that for almost every $\om \in \Om$, $\cL_\om(h_\om) = \lambda_\om h_{\te \om}$, $(\cL_\om)^\ast(\mu_{\te \om}) = \lambda_\om \mu_{\om}$ and, for all $f_\om \in L^1(\mu_{ \om})$,
\begin{equation} \label{eq:decay_without_speed}  \lim_{n \to \infty} \left\| ({\la_\om \cdots \la_{\te^{n-1}\om} })^{-1} {\cL_{\om}^n (f^\om)} - h_{\te^n\om}\int f_\om d\mu_\om\right\|_{L^1(\mu_{\te^n \om})} =0, 
\end{equation}
where $\cL_{\om}^n:=  \cL_{\te^{n-1}\om} \cdots \cL_{\om}$. However, the rate of convergence remained open. The main results in here, theorems \ref{theo:rpf} and \ref{theo:rpf-normalized}, give answer to this in two ways. In theorem \ref{theo:rpf}, the following generalisation of the results on   exponential decay by Bogenschütz-Gundlach (\cite{BogenschutzGundlach:1995}) and Kifer (\cite{Kifer:2008}) is obtained.
There exists $s \in (0,1)$ such that for a fibrewise Lipschitz continuous function $f=\{f_\om\}$, $n \in \bbN$ and a.e. $\om \in \Om$,
\begin{eqnarray} \label{eq:spectral_gap} 
 \left\| ({\la_\om \cdots \la_{\te^{n-1}\om} })^{-1}  {\cL_\om^{n} (f)}/{ h_{\te^{n}(\om)} } - \int f d\mu_\om \right\|_L^{\te^{n}\om}
 & \leq&   \|f \|^\om_L \cdot O_\om(s^n),
\end{eqnarray}
where $ \|f \|^\om_L$ refers to the Lipschitz norm of the restriction $f\arrowvert_{X_\om}$. Note that (\ref{eq:spectral_gap}) gives rise to an interpretation as a spectral gap: the operator might be written as a sum of a projection to a one-dimensional space and an operator which decays eventually exponentially fast, even though the waiting time to decay depends on the fibre $X_\om$. Neither the proofs in \cite{BogenschutzGundlach:1995} and \cite{Kifer:2008} nor the method used in here provide control of the random variables associated with $O_\om(s^n)$. However, as shown in theorem \ref{theo:rpf-normalized}, this is possible in case that $\cL_\om(1)=1$ a.s.. In there, the constants associated with exponential decay along returns to $\Om_{\hbox{\tiny bp}}$ are explicitly given. For illustration, these results are applied to random matrices, where it is possible to specify the associated constants under additional assumptions. Furthermore, the results are applied to decay of correlations and are used to show that the $\psi$-mixing coefficients of the associated process have exponential decay. As a further application, it is shown that the random measure given by $h_\om d\mu_\om$ is an equilibrium state.

The article's structure is the following. Section \ref{sec:22} contains the main lemma which is proven in the context of non-stationary shift spaces as introduced in \cite{ArnouxFisher:2005,Fisher:2009} in order to reveal the pathwise nature of the lemma. Namely, for a sequence of operators $(\cL_k)$ such that each $\cL_k$ maps Hölder functions defined on a Polish space $\Sig_k$ to Hölder functions on $\Sig_{k+1}$, it is shown that the dual operators eventually contract the Vasershtein distance. In particular, in a strict sense, the lemma is not related to spectral theory since even the spectra of the operators are not defined. In this situation, the technique of Birkhoff cones was applied  by Fisher in \cite{Fisher:2009} in order to characterise unique ergodicity of adic transformations through a Perron-Frobenius theorem for products of transition matrices given by a non-stationary shift space. However, this technique seems to be unavailable due to the lack of compactness of $\Sig_{k}$.
In here, the proof relies on 
an argument by Hairer and Mattingly in \cite{HairerMattingly:2008} and is based on the Monge-Kantorovich duality, optimal transport and the construction of the coupling in step (2) of the proof of lemma \ref{prop:main_estimates_1}. Even though coupling methods are known in the theory of dynamical systems and were used e.g. by Kuzmin in order to show subexponential decay for the Gauss map, by Bressaud et al. in \cite{BressaudFernandezGalves:1999} to obtain decay of correlations and the Vasershtein metric was used by Galatolo and Pacifico in \cite{GalatoloPacifico:2010} to obtain exponential decay of a  return map of a Lorenz-like flow, it seems that the  combination through ideas from optimal transport were not known in this context. 
As an application of this main lemma, it is shown in Corollary \ref{cor:non-stationary-mixing} under very mild conditions on the non-stationary shift space that invariant measures are unique and that the Ruelle operators converge.

In section \ref{sec:2}, random topological Markov chains with the b.i.p.-property are introduced. Moreover, the main lemma is adapted and applied to random topological Markov chains: the ergodicity of $\te$ allows to control the parameters in lemma \ref{prop:main_estimates_1} through successive returns to certain subsets of $\Om_{\textrm{\tiny bp}}$. In particular, if $\cL_\om(1)=1$ a.s., then this gives rise to a contraction of the Vasershtein metric  in corollaries \ref{cor:Markov_Version} and \ref{cor:Absolute_Markov_Version}. 
Section \ref{sec:3} contains two versions of Ruelle's theorem, theorems \ref{theo:rpf} and \ref{theo:rpf-normalized}, for random topological Markov chains  and their applications to random matrices, equilibrium states and asymptotic independence. For example, it is shown that the so called 
$\psi$-mixing coefficients decay exponentially fast and that there is an exponential decay of correlations of Hölder functions against functions in $L^1$. 
Finally, in section \ref{sec:4} theorems \ref{theo:rpf} and \ref{theo:rpf-normalized} are proven by combining the results of section \ref{sec:2}
with the existence of $\{ \lambda_\om \}$ and $\{\mu_\om\}$ given by  Theorem 4.7 in \cite{Stadlbauer:2010}.

\section{Non-stationary shift spaces, Vasershtein distances and the main lemma} \label{sec:22}
We now introduce the notion of non-stationary shift spaces and proof the main lemma in this context in order to reveal its pathwise character. In here, the notation is adapted to the one for random topological Markov chains and therefore differ from the definition in \cite{ArnouxFisher:2005,Fisher:2009} essentially by not explicitly using Bratteli diagrams. 
So assume that, for each  $n \in \bbN_0 := \bbN \cup \{0\}$ there is a subset $\cW^1_n \subset \bbN$ and a matrix  $A_n = \big(\al_{ij}^n,\, i \in \cW^1_n,j \in \cW^1_{n+1}\big)$, with entries $\al_{ij}^n\in\{ 0,1\}$ such that $\sum_{j \in \cW^1_{n+1}} \al_{ij}^n>0$ and $\sum_{j \in \cW^1_{n-1}} \al_{ji}^{n-1}>0$ for all $i \in \cW^1_n$ with $n \in \bbN_0$ and $n \in \bbN$, respectively. This gives rise to a sequence of shift spaces
\[
\Sig_n:=\left\{ (x_0,x_1,\ldots):\, \al_{x_i x_{i+1}}^{i+ n}=1\,\,\forall i=0,1,\ldots
\right\}
\]
and an action $T: \Sig_n \to \Sig_{n+1}$, for $n \in \bbN_0$, of the shift map $T$ defined by $T(x_0,x_1,x_2...):= (x_1,x_2...)$.
In this situation, we will refer to $((\Sig_n),T)$ as a \emph{non-stationary shift}. If $\Sig_n = \Sig_m$ for all $n,m \in \bbN_0$, then we will refer to $(\Sig_0,T)$ as a  a \emph{stationary shift}.
 
In order to define a system of neighbourhoods in analogy to stationary shifts, a finite word $a=(x_0, x_1, \ldots,x_{k-1}) \in \bbN^k$ of length $k$ will be called \emph{$n$-admissible} if $x_i \in \cW^1_{n+i}$ and $\al_{x_ix_{i+1}}^{n+i}=1$, for $i=0, \ldots , k-1$. We will refer to $\cW^k_n$ as the set  of $n$-admissible words of length $k$ and to, for $a=(a_0, a_1, \ldots,a_{k-1}) \in \cW^k_n$,  
\[ [a]_n = [a_0,a_1,...,a_{k-1}]_n :=\{ x\in \Sig_n:\, x_i=a_i,\, i=0,1,...,k-1\}\]
as a \emph{cylinder set}. As in case of a stationary shift, it follows that $\Sig_n$ is a Polish space with respect to the topology generated by these cylinder sets and that, for  $r \in (0,1)$, the  metric $d_r$ defined by
\[
d_r((x_0,x_1,x_2,\ldots),(y_0,y_1,y_2,\ldots)) := r^{\min\{i: x_i \neq y_i\} }
,\] 
is compatible with this topology. Also note that, for $a=(a_0, a_1, \ldots,a_{k-1}) \in \cW^k_n$, the restriction $T^k\arrowvert_{[a]_n} : [a]_n \to T^k([a]_n) \subset\Sig_{k+n}$ is a homeomorphism. The inverse of this map will be denoted by $\tau_a$ and it easily can be seen that
\[ \tau_a : T^k([a]_n) \to [a]_n, \; (x_0x_1 \ldots ) \mapsto (a_0a_1 \ldots a_{k-1} x_0x_1 \ldots ).  \]

\subsubsection*{Topological transitivity, mixing and the b.i.p.-property.} In contrast to stationary shift spaces, the notions of topological transitivity etc. require an additional condition which guarantees that each element a $\in \cW^1_n$ occurs infinitely often. That is, we refer to  $((\Sig_n),T)$ as \emph{topologically transitive} 
if for all
$ a \in \cW^1_{k_1}, b \in \cW^1_{k_2}$ for some $k_1,k_2 \in \bbN_0$, there exists a sequence $(n_l: l \in \bbN)$ with $n_l \nearrow \infty$ such that $b \in \cW^1_{k + n_l}$
 and 
 \[ [a]_{k_1} \cap T^{-n_l} ( [b]_{k + n_l}) \neq \emptyset  \hbox{ for all }  l \in \bbN.\] 
Furthermore, we will refer to   $((\Sig_n),T)$ as \emph{topologically mixing} if $((\Sig_n),T)$ is topologically transitive and if, for all
$ a \in \cW^1_{k_1}, b \in \cW^1_{k_2}$ for some $k_1,k_2 \in \bbN_0$, there exists $N_{ab}$ such that 
\[ [a]_{k_1} \cap T^{-n} ( [b]_{k + n}) \neq \emptyset \hbox{ for all } n \geq N_{ab} \hbox{ s.t. } b \in \cW^1_{k + n}. \]

Finally, we  will say that $((\Sig_n),T)$ has \emph{big preimages (b.p.)} if $((\Sig_n),T)$ is topologically mixing and there exists a finite subset $\cI$  of $\bbN$ and an infinite subset $\mathcal{K} \subset \bbN$ such that for all $n \in \mathcal{K} $ and  for each $a \in \cW^1_{n}$, there exists $b \in \cI \cap \cW^1_{n-1}$ with $ba \in \cW^2_{n-1}$.

\subsubsection*{Hölder continuity and the Ruelle operator.}
For  $f:\, \Sig_n \to \bbR$ and $k \in \bbN$,
\[V_k(f)=\sup \left\{ |f(x)-f(y)|: x,y \in [a]_n, a \in \cW^k_n \right\}\]
is called the $k$-th variation of $f$. We refer to $f$ as a  \emph{locally $(r,m)$-H\" older continuous function with Hölder parameter $r\in (0,1)$ and  Hölder index $m \in \bbN$} if there exists $\kappa \geq 0$ such that $V_k(f) \leq \kappa r^k$ for all $k \geq m$. If $\kappa$ is minimal, then $\kappa$ will be referred to as the \emph{Hölder constant of index $m$} of $f$. Furthermore, if $f$ is locally $(r,m)$-H\"older continuous and $\|f\|_\infty < \infty$, then we will refer to $f$ as a \emph{$r$-Hölder continuous function}. Note that $\|f\|_\infty < \infty$ and local H\"older continuity imply local H\"older continuity of index 1. If we consider $\Sig_n$ equipped with the metric $d_r$, then we also will refer to a $r$-Hölder continuous function as a \emph{Lipschitz continuous function}. In order to recover the usual definition of local H\" older continuity, it suffices to fix a metric $d_{r^\ast}$ on $\Sig_n$: the above Hölder condition is equivalent to $|f(x)-f(y)|\leq \kappa d_{r^\ast}(x,y)^s$, for all $x,y$ with $d(x,y) \leq (r^\ast)^k$ and $s = \log r / \log r^\ast$.

Now assume that $(\varphi_n:\Sig_n \to \bbR)$ is a sequence of locally $(r,l)$-H\" older continuous functions with Hölder constant $\kappa_n >0$. As in the stationary case,  the following
 estimate for Birkoff sums 
\[S_n(\varphi_k) := {\textstyle \sum_{i=0}^{n-1} \, \varphi_{i+k}\circ T^i }\]
holds.
Namely, for $n < m - l$, $x,y \in [a]_k$ with $a \in \cW_k^m$, we have that 
\begin{align} \label{eq:bounded_non_stationary_distortion}
|S_n \varphi_k (x)- S_n \varphi_k (y)|  & \leq   
 \sum_{i=0}^{n-1} \kappa_{k+i} r^{m-i} \leq r^{m-n} \log B_{k+n}, \quad  \hbox{ for } \\
 \nonumber B_{k+n} & := \exp \sum_{i=1}^{k+n} \kappa_{k+n-i} r^{i}.     
\end{align}
This estimate  allows to identify the relevant function spaces for the following sequence of operators. Assume that $(\varphi_k : k \in \bbN_0)$ has Hölder index $2$. The $k$-th Ruelle operator with respect to the potential $\varphi_k$ is  defined by, for $x \in \Sig_{k+1}$ and  $f:\Sig_{k} \to \bbR$ in a suitable space,
\[ \cL_{k}(f)(x) = \sum_{Ty=x} e^{\varphi_k(y)} f(y).\]  
For $k \in \bbN$, set $\cL^{n}_k := \cL_{k+n-1} \circ \cdots \circ \cL_{k+1}  \circ \cL_{k}$ and note that $\cL^k_{n}$ maps functions with domain $\Sig_{n}$ to functions  with domain $\Sig_{n+k}$ and that
\[ \cL^n_{k}(f)(x) = \sum_{T^n(y)=x} e^{S_n \varphi_k(y)} f(y) = \sum_{v \in \cW^n_k} e^{S_n \varphi_k(\tau_v(x))} f (\tau_v(x)).\]  
\begin{proposition} \label{prop:L_leaves_invariant_the_Lipschitz_functions}
If $\cL_{k}(1)=1$ and $(\varphi_k)$ is locally $(r,2)$-Hölder for all $k \in \bbN_0$, then $\cL^n_k$ maps $r$-Hölder continuous functions defined on $\Sig_k$ to $r$-Hölder continuous functions defined on $\Sig_{k+n}$. In particular, for a Hölder continuous function $f:\Sig_k \to \bbR$, we have $\|\cL^n_k(f)\|_\infty \leq \|f\|_\infty$ and the Hölder constant of index 1 of $\cL^n_k(f)$
is smaller than or equal to $\kappa_f r^n + \|f\|_\infty (B_{k+n} -1)$, with $\kappa_f$ referring to the Hölder constant of index {n+1} of $f$.
\end{proposition}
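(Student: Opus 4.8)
The plan is to reduce everything to the normalisation identity. Since $\cL_k(1)=1$ for every $k$, composing the operators gives $\cL^n_k(1)=1$ on $\Sig_{k+n}$, which, read through the branch decomposition, is exactly
\[ \sum_{v \in \cW^n_k} e^{S_n\varphi_k(\tau_v(x))} = 1 \qquad (x \in \Sig_{k+n}). \]
Thus each $\cL^n_k$ is an averaging operator with respect to the position-dependent probability weights $p_v(x):=e^{S_n\varphi_k(\tau_v(x))}$. Well-definedness of $\cL^n_k(f)$ and the bound $\|\cL^n_k(f)\|_\infty \leq \|f\|_\infty$ then follow at once by inserting $|f|\leq\|f\|_\infty$ into the sum and using $\sum_v p_v(x)=1$; this already settles the assertion on the sup-norm.

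For the H\"older estimate I would fix $j\geq 1$, a word $b\in\cW^j_{k+n}$, and $x,y\in[b]_{k+n}$, so that $x$ and $y$ agree in the first $j$ coordinates. Since $x_0=y_0$, the two points admit the same admissible preimages, and for each $v\in\cW^n_k$ the points $\tau_v(x)=vx$ and $\tau_v(y)=vy$ agree in the first $n+j$ coordinates, i.e.\ lie in a common cylinder of $\cW^{n+j}_k$. The next step is the telescoping split
\[ p_v(x)f(\tau_v(x)) - p_v(y)f(\tau_v(y)) = p_v(x)\bigl(f(\tau_v(x))-f(\tau_v(y))\bigr) + f(\tau_v(y))\bigl(p_v(x)-p_v(y)\bigr), \]
followed by summation over $v$ and separate treatment of the two resulting sums.

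For the first sum, $|f(\tau_v(x))-f(\tau_v(y))| \leq V_{n+j}(f) \leq \kappa_f r^{n+j}$, because $\kappa_f$ is the H\"older constant of index $n+1$ of $f$ and $n+j\geq n+1$; with $\sum_v p_v(x)=1$ this contributes at most $\kappa_f r^n\cdot r^j$. For the second sum I would use the termwise argument behind the bounded-distortion estimate (\ref{eq:bounded_non_stationary_distortion}), which only requires the variation of each $\varphi_{k+i}$ to be controlled at depth $(n+j)-i\geq 2$ and hence is available for every $j\geq 1$; it yields $|S_n\varphi_k(\tau_v(x))-S_n\varphi_k(\tau_v(y))| \leq r^j\log B_{k+n}$. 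Writing $|p_v(x)-p_v(y)| = p_v(y)\,|e^{S_n\varphi_k(\tau_v(x))-S_n\varphi_k(\tau_v(y))}-1| \leq p_v(y)\,(B_{k+n}^{\,r^j}-1)$ and $|f(\tau_v(y))|\leq\|f\|_\infty$, the identity $\sum_v p_v(y)=1$ turns this sum into $\|f\|_\infty\,(B_{k+n}^{\,r^j}-1)$. Finally I would invoke convexity of $t\mapsto B_{k+n}^{\,t}$ on $[0,1]$ (its graph lies below the chord through the endpoints $0$ and $1$) to get $B_{k+n}^{\,r^j}-1 \leq r^j\,(B_{k+n}-1)$. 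Adding the two contributions gives $V_j(\cL^n_k(f)) \leq \bigl(\kappa_f r^n+\|f\|_\infty(B_{k+n}-1)\bigr) r^j$ for every $j\geq 1$, which is exactly the claimed bound on the H\"older constant of index $1$; together with the sup-norm bound this shows $\cL^n_k(f)$ is $r$-H\"older, settling the first assertion.

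The splittings and the normalisation bookkeeping are routine. The step requiring genuine care is the distortion estimate: one must check that the variation of each $\varphi_{k+i}$ is controlled even at the shallowest relevant depth, i.e.\ that index-$2$ H\"older continuity suffices for all $j\geq 1$ rather than only for large $j$ — this is precisely why the hypothesis is "index $2$" and why the H\"older constant of $f$ must be measured at index $n+1$. The small but essential analytic trick is the convexity inequality converting $B_{k+n}^{\,r^j}-1$ into $r^j(B_{k+n}-1)$, which is what produces a clean geometric factor $r^j$ and thereby a bound genuinely of the form required for the H\"older constant of index $1$.
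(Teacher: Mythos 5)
Your proposal is correct and follows essentially the same route as the paper: the same telescoping split into a term controlled by the Hölder constant of $f$ and a term controlled by the distortion of $S_n\varphi_k$, with the normalisation $\sum_v p_v=1$ absorbing the sums and positivity giving the sup-norm bound. Your convexity inequality $B_{k+n}^{\,r^j}-1\leq r^j(B_{k+n}-1)$ is just a repackaging of the paper's use of the monotonicity of $(e^x-1)/x$, and your careful check that index-$2$ Hölder continuity suffices for all $j\geq 1$ correctly fills in the depth bookkeeping implicit in (\ref{eq:bounded_non_stationary_distortion}).
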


\begin{proof} 
The estimate (\ref{eq:bounded_non_stationary_distortion})  gives rise to, for $x,y \in \Sig_{n+k}$ with $d_r(x,y) \leq r$, 
\begin{eqnarray*}
 && |\cL^n_k(f)(x) -\cL^n_k(f)(y)| \\
& \leq & \sum_{v \in \cW^n_k} e^{S_n \varphi_k(\tau_v(x))}\left| f (\tau_v(x)) -  f (\tau_v(y))\right| \\
&  & 
+  \sum_{v \in \cW^n_k} e^{S_n \varphi_k(\tau_v(x))} \left| f (\tau_v(y)) \left(1- e^{S_n\varphi_k (\tau_v(y)) - S_n\varphi_k (\tau_v(x))}\right)\right| \\
& \leq & 
\sup_{v \in \cW^n_k} \left( \kappa_f d_r(\tau_v(x),\tau_v(y)) +    \frac{\|f\|_\infty (B_{k+n} -1)}{\log B_{k+n} }
\left|S_n\varphi_k (\tau_v(y)) - S_n\varphi_k (\tau_v(x))\right| \right) \\
& \leq & \left(\kappa_f r^n + \|f\|_\infty (B_{k+n} -1) \right) d_r(x,y),
\end{eqnarray*}
where the second step relies on $|1 - \exp(x)| \leq \exp(|x|)-1$ and the monotonicity of $(\exp(x)-1)/x$ for $x \geq 0$. 
Futhermore, $\|\cL^n_k(f)\|_\infty \leq \|f\|_\infty$ follows from the fact that $\cL^n_k$ maps positive functions to positive functions and  $\cL^n_k(1)=1$.
\end{proof} 

Before analysing the action of the dual of ${\cL^n_k}$ on Borel probability measures, we now fix the systems under consideration.   
That is, we say that $((\Sig_n),T,(\varphi_n))$ satisfies \emph{property (${\hbox{H}}^\ast$)} if $((\Sig_n),T)$ satisfies the b.p.-property, $\cL_n(1)=1$ for all $n \in \bbN_0$ and 
 the potentials $\varphi_n$ are locally $(r,2)$-H\"older continuous for all $n \in \bbN_0 \setminus \mathcal{K}$ and  locally $(r,1)$-H\"older continuous 
  for all $n \in \mathcal{K}$.

\subsection{Vasershtein distances and the coupling construction} \label{subsec:Vasershtein}
Assume that $Y$ is a Polish space and that  $\mu,\nu$ are Borel probability measures on $Y$. We then refer to 
\[\Pi(\mu,\nu) := \{m \in \cP(Y^2): \pi_1^\ast(m) = \mu, \pi_2^\ast(m) = \nu\} \]
as the couplings of $\mu$ and $\nu$, where $\cP(Y^2)$ is the set of  Borel probability measures on $Y^2$ and $\pi_i$ are the canonical projections. Now assume that $d$ is a metric compatible with the topology of $Y$. 
The Vasershtein distance of $\mu$ and $\nu$ is then defined by 
\[W(\mu,\nu) := \inf\{ {\textstyle \int d(x,y) dm} : {m \in \Pi(\mu,\nu)} \}.\]
As it is well-known (see, e.g. \cite{Villani:2009}), the Vasershtein distance is compatible with weak convergence and, by the Monge-Kantorovich duality,
\begin{equation}
\label{eq:KM-duality}
W(\mu,\nu) = \sup\{ {\textstyle \int f d\mu - \int f d\nu} : D(f) \leq 1 \},
\end{equation}
where $D(f) := \sup \{ |f(x)-f(y)|/d(x,y) : x,y \in Y\}$ refers to the \emph{Lipschitz constant} of $f$. Note that $D(f)$ is obtained, in contrast to the Hölder constant defined above, by taking the supremum over all pairs $(x,y)$.

The following proposition is the principal result of this section and is obtained by adapting the asymptotic coupling method in \cite[Section 2.1]{HairerMattingly:2008} to the non-stationary setting. Namely, as a corollary of proposition \ref{prop:L_leaves_invariant_the_Lipschitz_functions}, 
the operator defined by 
\[\int f d{\cL^n_k}^\ast(\mu) :=  \int \cL^n_k(f) d\mu\]
 maps  Borel probability measures on $\Sig_{k+n}$ to 
Borel probability measures on $\Sig_k$. By applying an asymptotic coupling method and the Monge-Kantorovich duality, it is shown below that ${\cL^n_k}^\ast$ acts as a contraction with respect to the Vasershtein distance.  
In order to shed light to the approach via the coupling construction, the involved constants are subject of a careful analysis in order to obtain effective bounds with respect to this method. 

The associated constants are defined as follows. Let $\beta \in (0,1)$, $r$ the H\"older parameter of $\{\varphi_k:\; k \in \bbN_0\}$ and $B_k$ as in (\ref{eq:bounded_non_stationary_distortion}). For $k \in \bbN_0$, define
\begin{eqnarray*} 
 \alpha_k := B_{k}/\beta, \quad
\overline{d}_k(x,y) := \min\{1, \alpha_k d_r(x,y)\}, 
\end{eqnarray*}
giving rise to the Vasershtein distances $W_k$ and Lipschitz constants $\overline{D}_k$ with respect to the metric $\overline{d}_k$ on $\Sig_k$. 
Also choose $n_k \in \bbN$ with 
\[n_k \geq \lfloor  - \log \alpha_{k} / \log r \rfloor + 1,\]
 where $\lfloor t \rfloor$ refers to the biggest integer smaller  than or equal to $t \in \bbR$. 
Furthermore, for each $k \in \bbN_0$, choose an element $\mathbf{o}_k$ in $\cW^1_k$ and define 
\begin{eqnarray*}
m_k &:= & \min\left(\left\{n \geq 1:\; k + n  \in \mathcal{K} , \,
(A_{k} A_{k+1} \cdots A_{k+n-2})_{\mathbf{o}_k j} \geq 1 \, \forall j \in \mathcal{J} \cap \cW^1_{k+n-1}
\right\}\right),
\end{eqnarray*}
which is finite by the topological mixing property. Now assume that $x \in \Sig_{k + m_k}$. By the b.p.-property, there is $j(x) \in \mathcal{J}$ such that $x \in T([j(x))]_{k+m_k-1})$. Moreover, since $\mathcal{J}$ is a finite set, there exists a finite subset $\mathcal{U}_k \subset \cW_k^{m_k -1}$ such that for each $j \in \mathcal{J} \cap \cW^1_{k+m_k -1}$, there exists an element $u = (\mathbf{o}_k u_1 \ldots j) \in \mathcal{U}_k$. Hence, for each $x \in \Sig_{k + m_k}$, there exists $u(x) \in \mathcal{U}_k$ with $x \in T^{m_k}([u(x)]_k)$. In particular, it follows that 
\begin{equation} \label{eq:bound from below - main proof - non stationary}
 C_k := \inf_{x \in \Sig_{k + m_k}}  e^{S_{m_k}\varphi_k (\tau_{u(x)}x)}  \geq  \inf_{u \in  \mathcal{U}_k} \inf_{y \in [u]_k} e^{S_{m_k}\varphi_k (y)}   > 0. 
 \end{equation}       
And finally, for $k \in \bbN_0$, set
\begin{equation} \label{def:contraction_ratio_t} t_k := \max\left\{ \beta, 1- \frac{(1 -  r^{n_k} \alpha_k)C_{k+n_k}}{B_{k+n_k}} \right\} \in (0,1). \end{equation}

\begin{lemma}\label{prop:main_estimates_1} Assume that $((\Sig_n),T,(\varphi_n))$ satisfies (${\hbox{H}}^\ast$) and that $r$ is the Hölder parameter of  $\varphi_k$. Furthermore, assume that for each $k \in \bbN_0$,  $f_k: \Sig_k \to \bbR$ is $r$-Hölder continuous and that $\mu_k$ and $\nu_k$ are Borel probability measures on $\Sig_k$. 
With  $l_k := n_k + m_{k+n}$, we then have that 
\begin{enumerate}
\item $\overline{D}_{k+n_k}(\cL_k^{n_k}(f_k)) \leq \overline{D}_k(f_k)$,
\item $\overline{D}_{k+l_k}(\cL_k^{l_k}(f_k)) \leq t_{k} \cdot \overline{D}_k(f_k)$,  
\item $W_k((\cL^{n_k}_k)^\ast(\mu_{k + n_k}),(\cL^{n_k})^\ast(\nu_{k + n_k})) \leq W_{k + n_k}(\mu_{k+n_k},\nu_{k+n_k})$,
\item $W_k((\cL^{l_k}_k)^\ast(\mu_{k+l_k}),(\cL^{l_k}_k)^\ast(\nu_{k+l_k})) \leq t_k \cdot  W_{k + l_k}(\mu_{k+l_k},\nu_{k+l_k})$.
\end{enumerate}
\end{lemma}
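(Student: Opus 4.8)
The plan is to prove the two Lipschitz estimates (1) and (2) for the operators acting on functions, and then to read off the two Vasershtein estimates (3) and (4) for the dual operators by Monge--Kantorovich duality. Unravelling the definition of the dual operator gives $\int g\, d(\cL^n_k)^\ast(\mu) = \int \cL^n_k(g)\, d\mu$, so that by (\ref{eq:KM-duality}),
\[ W_k\big((\cL^n_k)^\ast\mu,(\cL^n_k)^\ast\nu\big) = \sup_{\overline{D}_k(g)\le 1}\Big(\int\cL^n_k(g)\,d\mu - \int\cL^n_k(g)\,d\nu\Big). \]
Once it is known that $\cL^n_k$ maps $\{\overline{D}_k\le 1\}$ into $\{\overline{D}_{k+n}\le c\}$, the function $c^{-1}\cL^n_k(g)$ is admissible in the supremum defining $W_{k+n}$, so the right-hand side is bounded by $c\,W_{k+n}(\mu,\nu)$. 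Applying this with $n=n_k$, $c=1$ yields (3) from (1), and with $n=l_k$, $c=t_k$ yields (4) from (2). It therefore remains to establish (1) and (2).

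For (1), I would fix $f=f_k$ and set $M:=\overline{D}_k(f)$. Since $\cL_k(1)=1$, adding a constant to $f$ alters neither $M$ nor $\cL^{n_k}_k(f)$ beyond the same additive constant, so I may assume $\inf f=0$, whence $\|f\|_\infty\le M$ as $|f(x)-f(y)|\le M\overline{d}_k(x,y)\le M$. The choice $n_k\ge\lfloor-\log\alpha_k/\log r\rfloor+1$ guarantees $r^{n_k}\alpha_k<1$; and whenever $\alpha_k r^j\le 1$, two points in a common $j$-cylinder satisfy $\overline{d}_k=\alpha_k d_r$, so $V_j(f)\le M\alpha_k r^j$, giving $f$ a Hölder constant of index $n_k+1$ at most $M\alpha_k$. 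Proposition \ref{prop:L_leaves_invariant_the_Lipschitz_functions} then bounds the Hölder constant of index $1$ of $g:=\cL^{n_k}_k(f)$ by
\[ M\alpha_k r^{n_k} + M(B_{k+n_k}-1) < M B_{k+n_k} < M\,\alpha_{k+n_k}, \]
the last two inequalities using $r^{n_k}\alpha_k<1$ and $\alpha_{k+n_k}=B_{k+n_k}/\beta$ with $\beta\in(0,1)$. As $g$ is an average of values of $f$ with weights summing to $1$, its oscillation is at most $M$. Checking the two regimes of $\overline{d}_{k+n_k}$ — the close one ($\alpha_{k+n_k}d_r<1$) through the index-$1$ Hölder bound and the displayed inequality, the far one ($\overline{d}_{k+n_k}=1$) through the oscillation bound — yields $\overline{D}_{k+n_k}(g)\le M$, which is (1).

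For (2) I would factor $\cL^{l_k}_k=\cL^{m}_{k'}\circ\cL^{n_k}_k$ with $k':=k+n_k$ and $m:=m_{k'}$, put $g:=\cL^{n_k}_k(f)$ so that $\overline{D}_{k'}(g)\le M$ by (1), and pass to the preimage measures. Writing $P_x:=\sum_{v\in\cW^m_{k'}}e^{S_m\varphi_{k'}(\tau_v x)}\delta_{\tau_v x}$, which is a probability measure since $\cL^m_{k'}(1)=1$, one has $\cL^m_{k'}(g)(x)=\int g\,dP_x$ and hence, by (\ref{eq:KM-duality}), $|\cL^m_{k'}(g)(x)-\cL^m_{k'}(g)(y)|\le M\,W_{k'}(P_x,P_y)$. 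Thus (2) reduces to the coupling estimate $W_{k'}(P_x,P_y)\le t_k\,\overline{d}_{k'+m}(x,y)$ for $x,y\in\Sig_{k'+m}$. I would build the coupling from two ingredients. The diagonal part matches $\tau_v x$ with $\tau_v y$ along a common word $v$; as these share the length-$m$ prefix $v$, $\overline{d}_{k'}(\tau_v x,\tau_v y)=\min\{1,\alpha_{k'}r^m d_r(x,y)\}$, which is cheap. The reservoir part is where the b.p.-property enters: by the construction preceding (\ref{eq:bound from below - main proof - non stationary}) every $x$ admits a distinguished preimage word $u(x)\in\mathcal{U}_{k'}$ of weight $e^{S_m\varphi_{k'}(\tau_{u(x)}x)}\ge C_{k'}$ whose image lies in the common cylinder $[\mathbf{o}_{k'}]_{k'}$; reserving mass $C_{k'}$ at these distinguished preimages and coupling it inside $[\mathbf{o}_{k'}]_{k'}$ keeps a definite fraction of the transported mass at reduced cost. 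Optimising the split between the two parts should produce the gain $1-t_k$, which by (\ref{def:contraction_ratio_t}) equals $(1-r^{n_k}\alpha_k)C_{k'}/B_{k+n_k}$: here $C_{k'}$ is the reserved common mass, the factor $1/B_{k+n_k}$ absorbs the mismatch between the weight vectors $(e^{S_m\varphi_{k'}(\tau_v x)})_v$ and $(e^{S_m\varphi_{k'}(\tau_v y)})_v$, estimated through the bounded-distortion inequality (\ref{eq:bounded_non_stationary_distortion}), and $1-r^{n_k}\alpha_k>0$ is secured by the choice of $n_k$.

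The step I expect to be the main obstacle is precisely this coupling construction: one must define the transport plan so that the reserved mass $C_{k'}$ is genuinely moved at the reduced cost afforded by the common cylinder $[\mathbf{o}_{k'}]_{k'}$ while simultaneously controlling the weight mismatch via (\ref{eq:bounded_non_stationary_distortion}), and then verify that the total cost is bounded by $t_k\,\overline{d}_{k'+m}(x,y)$ both when $\overline{d}_{k'+m}(x,y)=1$ and when $\overline{d}_{k'+m}(x,y)=\alpha_{k'+m}d_r(x,y)$. Once the displayed coupling estimate is in hand, (2) follows, and then (4) and (3) follow from (2) and (1) by the duality argument of the first paragraph, completing the proof of Lemma \ref{prop:main_estimates_1}.
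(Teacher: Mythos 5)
Your step for (i), and your derivation of (iii) and (iv) from (i) and (ii) via the Monge--Kantorovich duality, are correct; indeed the duality argument
\[
W_k((\cL^n_k)^\ast\mu,(\cL^n_k)^\ast\nu)=\sup_{\overline{D}_k(g)\le 1}\int\cL^n_k(g)\,d(\mu-\nu)\le \sup_{\overline{D}_{k+n}(h)\le c}\int h\,d(\mu-\nu)=c\,W_{k+n}(\mu,\nu)
\]
is cleaner than the paper's treatment, which disintegrates an optimal coupling of $\mu_{k+l_k},\nu_{k+l_k}$ against a continuously varying family of couplings $P^\varepsilon_{x,y}$ and lets $\varepsilon\to 0$. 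The gap is in your reduction of (ii).

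You factor $\cL^{l_k}_k=\cL^{m}_{k'}\circ\cL^{n_k}_k$ with $k'=k+n_k$, $m=m_{k'}$, and claim that (ii) reduces to $W_{k'}(P_x,P_y)\le t_k\,\overline{d}_{k'+m}(x,y)$ for the preimage measures $P_x=(\cL^m_{k'})^\ast(\delta_x)$. That intermediate estimate is false in general. The only common structure of $P_x$ and $P_y$ is the reserved mass $\ge C_{k'}$ landing in $[\mathbf{o}_{k'}]_{k'}$, but the $\overline{d}_{k'}$-diameter of that cylinder is $\min\{1,\alpha_{k'}r\}$, and since $\alpha_{k'}=B_{k'}/\beta>1$ there is no reason to have $\alpha_{k'}r<1$. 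Concretely, if $d_r(x,y)=1$ and $\alpha_{k'}r^{m}\ge 1$, every pair of support points $(\tau_v(x),\tau_w(y))$ shares a prefix of length at most $m$ and hence sits at $\overline{d}_{k'}$-distance exactly $1$, so $W_{k'}(P_x,P_y)=1$: no contraction. The quantity $1-r^{n_k}\alpha_k$ that you invoke controls the $\overline{d}_k$-diameter of the sets $\tau_{v_1}([\mathbf{o}_{k'}]_{k'})$, $v_1\in\cW^{n_k}_k$, measured on $\Sig_k$ --- not anything measured in $\overline{d}_{k'}$ on $\Sig_{k'}$. In other words, the extra $n_k$ steps are not there merely to regularize $f$ into $g=\cL^{n_k}_k(f)$; they are the mechanism that pushes the matched preimages below the truncation threshold $1/\alpha_k$. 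The coupling therefore has to be built for the full $l_k$-step operator: one matches $\tau_{v_1u(x)}(x)$ with $\tau_{v_1u(y)}(y)$ over all $v_1\in\cW^{n_k}_k$ with weight $\inf_z \exp(S_{l_k}\varphi_k\circ\tau_{v_1u(z)}(z))$, so that the matched points share the prefix $v_1\mathbf{o}_{k'}$ and lie at $\overline{d}_k$-distance at most $\alpha_kr^{n_k}<1$, while the total matched mass is at least $C_{k'}/B_{k'}$ by the distortion bound and $\cL^{n_k}_k(1)=1$. This gives $W_k((\cL^{l_k}_k)^\ast\delta_x,(\cL^{l_k}_k)^\ast\delta_y)\le 1-(1-r^{n_k}\alpha_k)C_{k'}/B_{k'}=s_k$ for all $x,y$, which, combined with the $\beta$-bound of your step (i) applied with $l_k$ in place of $n_k$ in the regime $\overline{d}_{k+l_k}(x,y)<1$, yields (ii) with $t_k=\max\{\beta,s_k\}$.
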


\begin{proof} The proof of assertions (ii) and (iv) consists of three main steps. We firstly derive a local contraction of $(\cL^n_k)^\ast$, then a concentration of $(\cL^n_k)^\ast$-images of Dirac-measures close to the diagonal and finally extend the results to arbitrary measures. The remaining, easier assertions then are proved using a simplified version of the second step.  

\subsubsection*{(1) Local contraction of $\cL$.} For a Lipschitz continuous function $f_k$, we clearly have that
\[ |f_k(x)-f_k(y)|/	\overline{d}_k(x,y)  
\leq 
\left\{\begin{array}{cl} D(f_k)/\alpha_k &: d_r(x,y) < 1/\alpha_k \\
D(f_k) &: d_r(x,y) \geq 1/\alpha_k.
 \end{array} \right.
\]
In particular, $\overline{D}_k(f_k)\leq D(f_k) \leq \alpha_k \overline{D}_k(f_k)$, which proves that $f_k$ is Lipschitz continuous on $\Sig_k$ with respect to the metric $\overline{d}_k$.
We now aiming for an estimate of $|\cL^n_\om(f)(x) -\cL^n_\om(f)(y)|$. Using $\cL_k(c)=c$, 
 we may assume without loss of generality that $\inf \{f_k(x):x \in \Sig_k\}=0$. Hence, we may assume that $\|f_k\|_\infty \leq \overline{D}_k(f_k)$.
Combining $r^{n_\om} \alpha_{\om} < 1$ with proposition \ref{prop:L_leaves_invariant_the_Lipschitz_functions} then gives that, for $x,y \in \Sig_{k + n_k}$
with $d_r(x,y) \leq r$ that 
\begin{eqnarray}
\nonumber
\frac{|\cL^{n_k}_k(f_k)(x) -\cL^{n_k}_k(f_k)(y)|}{\alpha_{k + n_k} d_r(x,y)} 
& \leq & \frac{D(f_k)r^n d_r(x,y)}{ \alpha_{k + n_k} d_r(x,y)} + \frac{\|f_k\|_\infty  (B_{k + n_k} -1)}{\alpha_{k + n_k} } \\
\nonumber
 & \leq &\overline{D}_k(f_k)\left(\alpha_k r^{n_k} +  B_{k + n_k} -1 \right)/\alpha_{k+n_k}\\
\label{eq:partial_lipschitz_coefficient}
 & \leq &\left( \overline{D}_k(f_k)B_{k+n_k}\right)/\alpha_{k+n_k}.
\end{eqnarray}
By the choice of  $\alpha_{k+n}$, the right hand side of (\ref{eq:partial_lipschitz_coefficient}) is smaller than or equal to $\beta \cdot \overline{D}_k(f_k)$. 

\subsubsection*{(2) Concentration near the diagonal.} 
We now construct $Q_{x,y} \in \Pi((\cL_k^{l_k})^\ast(\delta_x),(\cL^{l_k}_k)^\ast(\delta_y))$ with a uniform lower bound on a neighbourhood of the diagonal by considering inverse branches starting in $\mathcal{K}$ and passing through the cylinder $[\mathbf{o}]_{k+n_k} := [\mathbf{o}_{k+n_k}]_{k+n_k}$. 
Namely, the coupling is defined through the following decomposition. For $x \in \Sig_{k + l_k}$ and 
$v = (v_1v_2)\in \cW_k^{l_k}$ with $v_1 \in \cW_k^{n_k}$ and $v_2 \in \cW^{m_{k + n_k}}_{k + n_k}$,
define 
\begin{eqnarray*}
\phi_v^{(1)}(x) & := & \1_{[u(x)]_{k + n_k}} \circ T^{n_k}(x) \;\cdot \; 
\inf\left(\left\{ e^{S_{l_k}(\varphi_k)\circ\tau_{v_1 u(y)} (y)} : y \in \Sig_{k + l_k}\right\}\right) ,\\
\phi_v^{(2)}(x) &:= &e^{S_{l_k}(\varphi_k)\circ\tau_{v} (x)} - \phi_v^{(1)}(x),
\end{eqnarray*}
Observe that the indicator in the definition of $\phi_v^{(1)}(x)$ is equal to one if and only if $v_2 = u(x)$ and that $\phi_{v_1u(x)}^{(1)}(x) = \phi_{v_1u(y)}^{(1)}(y)$ for all $x,y \in  \Sig_{k + l_k}$.
 In order to obtain a bound on $\phi_v^{(1)}(x)$ for $v_2 = u(x)$, note that it follows from (\ref{eq:bound from below - main proof - non stationary}) that  
\begin{equation} 
\label{eq:lower bound potential - main proof - non stationary}
\phi_v^{(1)}(x)
\geq C_{k + n_k} \inf_{y \in [\mathbf{o}]_{k+n_k}} {e^{S_{n_k}(\varphi_k)\circ\tau_{v_1} (y)}}  
\geq \frac{C_{k + n_k}}{B_{k+n_k}}  \sup_{y \in [\mathbf{o}]_{k+n_k}}  {e^{S_{n_k}(\varphi_k)\circ\tau_{v_1} (y)}} 
\end{equation}
This gives rise to a probability measure $Q_{x,y}$ on $\Sig_k  \times \Sig_k $ by
\begin{eqnarray*}
Q_{x,y} &:=& \sum_{v_1 \in \cW_k^{n_k}}  \phi_{v_1u(x)}^{(1)}(x) \delta_{(\tau_{v_1u(x)}(x),\tau_{v_1u(y)}(y))}\\
& & +  \frac{1}{1- \sum_{v_1 \in \cW_k^{n_k}}  \phi_{v_1u(x)}^{(1)}(x)} {\sum_{v,w \in \cW_k^{l_k}}  \phi_{v}^{(2)}(x) \phi_{w}^{(2)}(y) \delta_{(\tau_{v}(x),\tau_{w}(y))}}. 
\end{eqnarray*}
As it easily can be verified, $Q_{x,y}$ is a coupling of $(\cL^{l_k}_k)^\ast(\delta_x)$ and  $(\cL^{l_k}_k)^\ast(\delta_x)$. By construction, we  have that  $\overline{d}_k(\tau_{v} (x),\tau_{v} (y)) \leq r^{n_k} \alpha_k <1$ for all $v \in \cW_k^{n_k}$ and $x,y \in [\mathbf{o}]_{k+n_k}$. Hence, by (\ref{eq:lower bound potential - main proof - non stationary}) and $\cL^{n_k}_k(1)=1$,
\[Q_{x,y} (\{(z,z'): \overline{d}_k (z,z') \leq  r^{n_k} \alpha_k \} ) \geq  \frac{C_{k + n_k}}{B_{k+n_k}}  .\]
We will now employ this estimate in order to obtain uniform contraction of $\overline{D}(f)$.
 In order to do so, let $\Delta_k :=\{(x',y') \in \Sig_k \times \Sig_k  : \overline{d}_k(x',y')\leq  r^{n_k} \alpha_k \}$. 
 For $x,y \in \Sig_{k+ l_k}$, we then have 
\begin{eqnarray*}
 && W_k ((\cL^{l_k}_k)^\ast(\delta_x),(\cL^{l_k}_k)^\ast(\delta_y)) \\
 & \leq & \int \overline{d}_k (x',y') dQ_{x,y}(x',y')
 \leq   r^{n_k} \alpha_k Q_{x,y}(\Delta_k)  + 1- Q_{x,y}(\Delta_k) \\
 & = &  1-  (1 -  r^{n_k} \alpha_k) {Q_{x,y}(\Delta_k)}
 \leq 1- \frac{(1 -  r^{n_k} \alpha_k)  C_{k + n_k}}{B_{k + n_k}} =: s_{k} <1.
 \end{eqnarray*}   
Hence, by the Monge-Kantorovich duality, 
\[
|\cL_k^{l_k}(f_k)(x) - \cL_k^{l_k}(f_k)(y)| =  \left|\int f_k d(\cL_k^{l_k})^\ast(\delta_x) - \int f_k d(\cL_k^{l_k})^\ast(\delta_y)\right| \leq \overline{D}_k(f_k) \cdot s_{k}.
\]

\subsubsection*{(3) Contraction of the Lipschitz constant and extension to arbitrary measures.} By the above,  
\[
\frac{|\cL_k^{l_k}(f)(x) - \cL_k^{l_k}(f)(y)|}{\overline{d}_{k + l_k}(x,y)} \leq 
\begin{cases}
s_{k} \cdot \overline{D}_k(f)   &:\; \overline{d}_{k + l_k}(x,y) =1 \\
\beta \cdot \overline{D}_k(f) & :\; \overline{d}_{k + l_k}(x,y) < 1. 
\end{cases}
 \]
Hence, for $t_k = \max\{ \beta, s_{k}\}$, we have $\overline{D}_{k + l_k}(\cL_k^{l_k}(f)) \leq t_{k} \cdot \overline{D}_k(f)$, which proves assertion (ii) of the proposition. By the Monge-Kantorovich duality, this implies that
\[  W_k((\cL_k^{l_k})^\ast(\delta_x),(\cL_k^{l_k})^\ast(\delta_y)) \leq t_{k} \overline{d}_{k + l_k}(x,y).\] 
Since $\mu_{k + l_k}$ and $\nu_{k + l_k}$ are probability measures on $\Sig_{k + l_k}$, there exists $Q$ in $\Pi(\mu_{k + l_k},\nu_{k + l_k})$,  referred to as optimal transport, such that  $W_{k + l_k}(\mu_{k + l_k},\nu_{k + l_k}) = \int \overline{d}_{k + l_k}(x,y) dQ(x,y)$. 
Moreover, let $P_{x,y} \in \Pi((\cL_k^{l_k})^\ast(\delta_x),(\cL_k^{l_k})^\ast(\delta_y))$ refer to an optimal transport of $(\cL_k^{l_k})^\ast(\delta_x)$ and $(\cL_k^{l_k})^\ast(\delta_y)$. Using a construction similar to the one in step (2), it is then possible to show the following. For $\varepsilon >0$ there exists $\delta > 0$ and  
 a coupling ${P}^\varepsilon_{x',y'} \in \Pi((\cL_k^{l_k})^\ast(\delta_{x'}), (\cL_k^{l_k})^\ast(\delta_{y'}))$ constructed from $P_{x,y}$ in a continuous way, such that 
 \[\left|\int \overline{d}_k \; d{P}_{x,y} - \int \overline{d}_k \; d{P}^\varepsilon_{x',y'} \right|\leq \varepsilon, \quad \forall x',y':\;  d_r(x,x'), d_r(y,y') < \delta.\]
In particular, $(x,y) \to W_k((\cL^{l_k}_k)^\ast(\delta_x),(\cL^{l_k}_k)^\ast(\delta_y))$ is continuous and there exists a locally continuous family $\{{P}^\varepsilon_{x,y} : x,y \in \Sigma_{k + l_k}\}$ which approximates the Vasershtein distance up to $\varepsilon$. For $Q^\varepsilon$ defined by $dQ^\varepsilon(x,y) := {P}^\varepsilon_{x,y} dQ(x,y)$, it is then easy to see that $Q^\varepsilon \in \Pi(\cL^{l_k}_k)^\ast(\mu_{k + l_k}),(\cL^{l_k}_k)^\ast(\nu_{k + l_k})$. Hence,
 \begin{eqnarray*}
W_k((\cL^{l_k}_k)^\ast(\mu_{k + l_k}),(\cL^{l_k}_k)^\ast(\nu_{k + l_k})) & \leq & \int \left(\int \overline{d}_k \; d{P}^\varepsilon_{x,y} \right)
 dQ(x,y) \\
 & \leq & \int W_k((\cL^{l_k}_k)^\ast(\delta_x),(\cL^{l_k}_k)^\ast(\delta_y))  dQ(x,y) + \varepsilon \\
& \leq & t_k  \int \overline{d}_{k + l_k}(x,y) dQ(x,y) + \epsilon \\ &=& t_k  W_{k + l_k}(\mu_{k + l_k},\nu_{k + l_k}) + \varepsilon.
\end{eqnarray*}
Since $\varepsilon > 0$ is arbitrary, assertion (iv) follows.

\subsubsection*{(4) Proof of assertions (i) and (iii).} The proof uses a simplified version of the arguments above. As in (2), we have  
\[
 W_k ((\cL^{n_k}_k)^\ast(\delta_x),(\cL^{n_k}_k)^\ast(\delta_y)) 
  \leq  \int \overline{d}_k (x',y') dQ_{x,y}(x',y') \leq 1.
 \]
The assertions then follow by the same argument as in (3).
\end{proof}

As a consequence of the above lemma, we obtain the following result on the convergence of $\cL_k^n$ and 
the unicity of the invariant measures for non-stationary shift spaces with property ${\hbox{H}}^\ast$. In here, we 
refer to a sequence of probability measures $(\mu_k)$ as an \emph{invariant sequence} if $\cL_k^\ast(\mu_{k+1}) = \mu_k$ for all $k \in \bbN_0$.

\begin{corollary} \label{cor:non-stationary-mixing}
Assume that $((\Sig_n),T,(\varphi_n))$ satisfies (${\hbox{H}}^\ast$), that there exists an {invariant sequence} $(\mu_k)$ of probability measures and that $n_k$ might be chosen such that $t_k$ in (\ref{def:contraction_ratio_t}) is uniformly bounded away from 1. Then the sequence  $(\mu_k)$ is unique and moreover, for a Lipschitz function $f : \Sig_k \to \bbR$, we have that  
\begin{equation} \label{eq:spectral_gap_non_stationary} \left\|\cL_k^n{f} - \int f d\mu_k \right\|_\infty \xrightarrow{n \to \infty} 0 . \end{equation}
\end{corollary}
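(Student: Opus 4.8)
The plan is to derive both claims from the iterated contraction estimate (iv) of Lemma \ref{prop:main_estimates_1}, combined with the invariance relation $(\cL_k^n)^\ast(\mu_{k+n}) = \mu_k$ (obtained by iterating $\cL_k^\ast(\mu_{k+1})=\mu_k$) and the Monge--Kantorovich duality (\ref{eq:KM-duality}). First I would fix the starting level $k$ and set up a block decomposition adapted to it: put $k_0 := k$ and $k_{i+1} := k_i + l_{k_i}$, so that, since $(\cL_{k_i}^{l_{k_i}})^\ast$ sends measures on $\Sig_{k_{i+1}}$ to measures on $\Sig_{k_i}$ and the duals compose as $(\cL_{k_0}^{N_j})^\ast = (\cL_{k_0}^{l_{k_0}})^\ast \circ \cdots \circ (\cL_{k_{j-1}}^{l_{k_{j-1}}})^\ast$ with $N_j := \sum_{i=0}^{j-1} l_{k_i}$, assertion (iv) telescopes to
\[
W_{k}\bigl((\cL_{k}^{N_j})^\ast(\sigma),(\cL_{k}^{N_j})^\ast(\rho)\bigr) \leq \Bigl(\prod_{i=0}^{j-1} t_{k_i}\Bigr)\, W_{k_j}(\sigma,\rho) \leq (t^\ast)^j
\]
for all probability measures $\sigma,\rho$ on $\Sig_{k_j}$, where $t^\ast := \sup_k t_k < 1$ by hypothesis and the final bound uses $\overline{d}_{k_j} \leq 1$, whence $W_{k_j} \leq 1$. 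Since each $l_{k_i} \geq 1$, one has $N_j \nearrow \infty$, so $j \to \infty$ as $n \to \infty$.

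For the convergence statement (\ref{eq:spectral_gap_non_stationary}) I would, given $n$, write $n = N_j + s$ with $N_j \leq n < N_{j+1}$, factor $(\cL_k^n)^\ast = (\cL_k^{N_j})^\ast \circ (\cL_{k_j}^s)^\ast$, and apply the displayed estimate to $\sigma := (\cL_{k_j}^s)^\ast(\delta_x)$ and $\rho := (\cL_{k_j}^s)^\ast(\mu_{k+n})$. Using invariance in the form $(\cL_k^n)^\ast(\mu_{k+n}) = \mu_k$ together with the two identities $\cL_k^n f(x) = \int f\, d(\cL_k^n)^\ast(\delta_x)$ and $\int f\, d\mu_k = \int f\, d(\cL_k^n)^\ast(\mu_{k+n})$, the duality (\ref{eq:KM-duality}) and step (1) of the lemma (which gives $\overline{D}_k(f) \leq D(f)$) then yield
\[
\Bigl| \cL_k^n f(x) - {\textstyle \int} f\, d\mu_k \Bigr| \leq \overline{D}_k(f)\, W_k\bigl((\cL_k^n)^\ast(\delta_x),(\cL_k^n)^\ast(\mu_{k+n})\bigr) \leq D(f)\,(t^\ast)^j ,
\]
uniformly in $x \in \Sig_{k+n}$ (note that $f$ is bounded, as $\Sig_k$ has $d_r$-diameter at most $1$). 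Letting $n \to \infty$ forces $j \to \infty$, which establishes (\ref{eq:spectral_gap_non_stationary}).

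For uniqueness I would compare two invariant sequences $(\mu_k)$ and $(\nu_k)$. Fixing $k$ and taking $n = N_j$, invariance gives $\mu_k = (\cL_k^{N_j})^\ast(\mu_{k_j})$ and $\nu_k = (\cL_k^{N_j})^\ast(\nu_{k_j})$, so the displayed contraction immediately yields $W_k(\mu_k,\nu_k) \leq (t^\ast)^j \to 0$. Since $\overline{d}_k$ is a bounded metric compatible with the topology of the Polish space $\Sig_k$, the distance $W_k$ separates points, and therefore $\mu_k = \nu_k$ for every $k$.

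The only care needed is bookkeeping: verifying that the duals of the block operators compose to $(\cL_k^{N_j})^\ast$, that the leftover partial block of length $s$ is absorbed harmlessly because every $W_{k_j}$ is bounded by $1$, and that $j \to \infty$ as $n \to \infty$. None of these is a genuine obstacle — the analytic substance lies entirely in Lemma \ref{prop:main_estimates_1}, and the corollary is a clean telescoping of its contraction estimate.
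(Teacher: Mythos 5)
Your proof is correct and follows essentially the same route as the paper: telescoping assertion (iv) of Lemma \ref{prop:main_estimates_1} along the blocks $k_{i+1}=k_i+l_{k_i}$, then combining the invariance $(\cL_k^n)^\ast(\mu_{k+n})=\mu_k$ with the Monge--Kantorovich duality. The only (harmless) difference is in upgrading from the subsequence $(N_j)$ to all $n$: you absorb the leftover partial block at the level of measures via $W_{k_j}\leq 1$, whereas the paper uses the sup-norm contraction $\|\cL_k(f)\|_\infty\leq\|f\|_\infty$ coming from $\cL_k(1)=1$.
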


\begin{proof} By hypothesis, $t:= \sup_n t_n <1$.  
 Lemma \ref{prop:main_estimates_1} then gives that, for $k \in \bbN_0$ and the sequence inductively defined by $p_0=k$ and $p_{j+1} = p_j + l_{p_j} $,
 \[ W_{k}((\cL_{k}^{p_j-k})^\ast(\mu_{p_j}),(\cL_{k}^{p_j-k})^\ast(\nu_{p_i})) \leq t^j W_{p_j}(\mu_{p_j},\nu_{p_j}) \leq t^j    \]
 Hence, if  $(\mu_{k})$ and $(\nu_k)$ are invariant  sequences of probability measures, then $W_k(\mu_k,\nu_k) = 0$. That is, the sequence  $(\mu_k)$ is unique. 
 Now assume that $f$ is Lipschitz and that, without loss of generality, $\overline{D}_k(f) \leq 1$. We then have by the Monge-Kantorovich duality and using the same argument as above that, for $x \in \Sig_{p_j}$,
 \begin{eqnarray*}
 \left| \cL_{k}^{p_j-k}(f)(x) - \int f d\mu_k \right| & = &  \left| \int f d(\cL_{k}^{p_j-k})^\ast (\delta_x) - \int f d(\cL_{k}^{p_j-k})^\ast(\mu_{p_j}) \right|\\
 & \leq & W_{k}((\cL_{k}^{p_j-k})^\ast(\delta_x),(\cL_{k}^{p_j-k})^\ast(\mu_{p_j})) \leq t^j.  
 \end{eqnarray*}      
Hence, $\|\cL_k^n{f} - \int f d\mu_k\|_\infty \to 0$ along a subsequence. However, since $\cL_k(1) =1$, we have that $\|\cL_k(f)\|_\infty \leq \|f\|_\infty$ for all $k$, which proves the  assertion.
\end{proof}

With respect to the corollary and its the proof, it is worth noting that the corollary is applicable in many situations since $(n_k)$ might grow arbitrarily fast. However, this implies also that the speed of convergence in (\ref{eq:spectral_gap_non_stationary}) could be arbitrarily slow.
For more specific applications of the main lemma, one might consider situations with several uniform bounds. For example, if 
$\mathcal{K}$ has a positive density and the mixing times $m_k$, the Hölder constants of $(\varphi_k)$
 and the $C_k$ in (\ref{eq:bound from below - main proof - non stationary}) are uniformly bounded (e.g. by a condition establishing some type of almost stationarity of $(\varphi_k)$ restricted to $\mathcal{U}_k$), then the main lemma would imply exponential speed of convergence to the limit in (\ref{eq:spectral_gap_non_stationary}). 
However, for random countable topological Markov chains, it is shown below that the analogues of these conditions are automatically satisfied due to the ergodicity of $\te$. Or, from an abstract point of view, the ergodicity of $\te$ establishes  a sufficient level of stationarity.

\section{Random countable topological Markov chains} \label{sec:2}
In this section, the details of the construction of random countable topological Markov chains and Hölder potentials are given. 
Note that the main difference to the definitions and arguments for non-stationary shift spaces rely in the underlying
measurable structure and therefore require adequate care.
For the definition of a random countable topological Markov chain, assume that 
$\te$ is an ergodic automorphism (i.e. ergodic, bimeasurable, invertible and probability preserving) of the probability space $(\Om,\cF,P)$, that $\{\cW^1_\om : \om \in \Om\}$ is a measurable family of subsets of $\bbN$ and that, for a.e. $\om \in \Om$,    
$A_\om=\big(\al_{ij}^\om,\, i \in \cW^1_\om,j \in \cW^1_{\te\om}\big)$ is a matrix
with entries $\al_{ij}^\om\in\{ 0,1\}$ such that $\om \mapsto A_\om$ is measurable and $\sum_{j \in \cW^1_{\te\om}} \al_{ij}^\om>0$, $\sum_{j \in \cW^1_{\te^{-1}\om}} \al_{ji}^{\te^{-1}\om}>0$ for all $i \in \cW^1_\om$.
For the random shift space 
\[
X_\om=\{ (x_0,x_1,...):\, \al_{x_ix_{i+1}}^{\te^i\om}=1\,\,\forall i=0,1,...
\},
\]
the (random) shift map $T_\om :X_\om\to X_{\te\om}$ is defined by $T_\om: (x_0,x_1,x_2...) \mapsto
(x_1,x_2,...)$. This gives rise to a globally defined map $T:X \to X$, with $X:=\{(\om,x):\, x\in X_\om\}$ and  $T(\om,x)=(\te\om,T_\om x)$. The quintuple $(X,T,\Om,P,\te)$, sometimes abbreviated by $(X,T)$, then is referred to as a \emph{random countable topological Markov chain}.

A finite word $a=(x_0, x_1, \ldots,x_{n-1}) \in \bbN^n$ of length $n$ is called \emph{$\om$-admissible}, if $x_i \in \cW^1_{\te^i \om}$ and $\al_{x_ix_{i+1}}^{\te^i\om}=1$, for $i=0, \ldots , n-1$. Moreover, $\cW^n_\om$ denotes the set  of $\om$-admissible words of length $n$ and, for $a=(a_0, a_1, \ldots,a_{n-1}) \in \bbN^n$,  
\[[a]_\om = [a_0,a_1,...,a_{n-1}]_\om :=\{ x\in X_\om:\, x_i=a_i,\, i=0,1,...,n-1\}\]
is called \emph{cylinder set}. As it easily can be verified, $X_\om$ is a closed subset of $\bbN^\bbN$ with respect to the topology generated by cylinder sets and hence, $X_\om$ is a polish space. As in case of non-stationary shift spaces,  the shift metric $d_r$ is compatible with this topology for all 
 $r \in (0,1)$. As above, $T^n_\om := T_{\te^{n-1}\om}\circ \cdots \circ T_{\te\om} \circ T_\om$ is a homeomorphism from 
 $[a]_\om$ onto $T_{\te^{n-1}\om}([a_{n-1}]_{\te^{n-1}\om})$ whose inverse will be denoted by 
 \[ \tau_a : T^n_\om([a]_\om) \to [a]_\om, \quad (x_0 x_1 \ldots) \mapsto (a x_0 x_1 \ldots). \]
 The set of those $\om \in \Om$ where the cylinder is nonempty will be denoted by $\Om_a$, that is 
\[\Om_a =\{\om \with [a]_\om\ne\emptyset\} = \{\om: a \in \cW^n_\om\}.\]
The set $\cW^n$ refers to the set of words $a$ of length $n$ with $P(\Om_a)>0$. 

For the definition of random probability measures, we will adapt the definition in 
\cite{Crauel:2002} to our setting. This adaption relies on the fact that the measurable structure of $\{(x,\om) : x \in X_\om, \om \in \Om\}$ is induced by the product structure of $\bbN^\bbN \times \Om$. 
That is, we refer to $\mu = \{\mu_\om\}$ as a random probability measure, if $\mu$ is a map
\[ \mu : \mathcal{B} \times \Om \to [0,1], \quad (B,\om) \mapsto \mu_\om(B), \]
where $\mathcal{B}$ refers to the Borel $\sigma$-algebra of the countable full shift $\bbN^\bbN$, such that
\begin{enumerate}
  \item for every $B \in \mathcal{B}$, $\om \mapsto \mu_\om(B)$ is measurable,
  \item for $P$-almost every $\om \in \Om$, $B \mapsto \mu_\om(B)$ is a probability measure with support $X_\om$.
\end{enumerate}

\subsubsection*{The b.i.p.-property.}
With the notion of cylinders at hand, we now give the definitions of topologically mixing and big images and preimages. We say that $(X,T)$ is \emph{(fibrewise) topologically mixing} if for all
$a,b\in\cW^1$, there exists a $\bbN$-valued random variable $N_{ab}=
N_{ab}(\om)$ such that, for $n\geq N_{ab}(\om)$, $a \leq \cW^1_\om$ and
$\te^n\om \in \Om_b$, we have that $[a]_\om\cap(T^n_\om)^{-1}[b]_{\te^n\om}\ne \emptyset$. 

Morever, assume that there exist $\Om_{\textrm{\tiny bi}}\subset \Om$ and $\Om_{\textrm{\tiny bp}}\subset \Om$ of positive measure and a finite subset $\cI$  of $\bbN$ such that 
\begin{enumerate}
  \item for each $\om \in \Om_{\textrm{\tiny bi}}$ and $a \in \cW^1_{\te^{-1}\om}$, there exists $b \in \cI$ with $ab \in \cW^2_{\te^{-1}\om}$
  \item for each $\om \in \Om_{\textrm{\tiny bp}}$ and  $a \in \cW^1_\om$, there exists $b \in \cI$ with $ba \in \cW^2_{\te^{-1}\om}$.
\end{enumerate}
If, in addition, $(X,T)$ is topologically mixing, then $(X,T)$ is said to have the  \emph{(relative) big images and big preimages property} or \emph{(relative) b.i.p.-property}. Note that this definition is equivalent to one in \cite{Stadlbauer:2010}.

\subsubsection*{Hölder and Lipschitz continuity.} 
In order to define fibrewise Hölder continuity, the $n$-th variation of a function 
$f:\, X\to\bbR$, $(\om,x) \mapsto f_\om(x)$ is defined by
\[V^\om_n(f)=\sup\{ |f_\om(x)-f_\om(y)|:\, x_i=y_i,\, i=0,1,\ldots,
n-1\}.\]
We refer to $f$ as a \emph{(fibrewise) locally $(r,k)$-H\" older continuous function with parameter $r \in (0,1)$ and index $k\in \bbN \cup \{0\}$} if there exists a random variable $\ka=\ka(\om)\geq 0$ such  that $\int \log \ka dP<\infty$ and
 $V^\om_n(f)\leq\ka(\om)r^n$ for all $n\geq k$. Furthermore, we will refer to a locally H\" older continuous function with $\|f_\om\|_\infty < \infty$ a.s. as a \emph{(fibrewise) Hölder continuous function}. Note that a Hölder continuous function is Lipschitz with respect to $d_r$. Therefore, we refer to 
 \[ D_\om(f) := \sup \{(f(x) - f(y))/d_r(x,y) : x,y \in X_\om\}. \] 
as the \emph{(relative) Lipschitz constant} of $f$.
In  complete analogy to non-stationary shift spaces, there is the following estimate for Birkhoff sums $S_n f_\om:= {\textstyle \sum_{k=0}^{n-1} \, f_{\te^k\om}\circ T^k_\om }$ (see also \cite[p. 80]{Stadlbauer:2010}).   For $n \leq m$, $x,y \in [a]_\om$ with $a \in \cW_\om^m$, and a locally $(r,m-n+1)$-Hölder continuous function $f$ 
\[
|S_nf_\om (x)- S_n f_\om (y)|  \leq r^{m-n}\sum_{k=1}^\infty \ka(\te^{n-k}\om) r^k.  
\]
It follows from an application of the ergodic theorem, that $\lim_{k \to \infty} k^{-1}  \log \ka(\te^{n-k}\om) = 0$ a.s. Hence, the radius of convergence of the power series on the right hand side is equal to 1 and, in particular, it follows from $r<1$ that the right hand side is finite a.s (see, e.g. \cite{DenkerKiferStadlbauer:2008}). 
However, if also $\int \ka dP<\infty$, e.g. if condition (H) below holds, monotone convergence and $\te$-invariance of $P$ imply that
\[ \int \sum_{k=1}^\infty \ka(\te^{n-k}\om) r^k dP(\om) = \sum_{k=1}^\infty r^k \int \ka dP  = \frac{r}{1-r} \int \ka dP < \infty . \]
In particular, for a given locally H\"older continuous function $\varphi$ with index less than or equal to $(m-n+1)$, we hence have that  
\begin{eqnarray}
\label{eq:key} (B_{\te^n \om})^{-1} \leq  (B_{\te^n \om})^{-r^{m-n}} \leq e^{S_n\varphi_\om (x)- S_n \varphi_\om (y)} \leq  (B_{\te^n \om})^{r^{m-n}} \leq  B_{\te^n \om}, \\
\nonumber \int \log B_\om dP< \infty, \quad \hbox{ where }B_\om:= \exp \sum_{k=1}^\infty \kappa(\te^{-k}\om)r^k.
\end{eqnarray}
After these considerations, we are now in position to give the definition of the systems under consideration. That is, we consider  $(X,T,\Om,P,\te)$ with the b.i.p.-property and a potential function $\varphi$ satisfying the following assumptions (H) and (S) on Hölder continuity and summability, respectively.
\begin{itemize}
  \item[{(H)}] The potential $\varphi$ is locally Hölder continuous with index 2 and the associated random variable $\kappa$ satisfies $\int \ka dP<\infty$. Furthermore, $V_1^\om(\varphi) < \infty$ for a.e. $\om \in \te^{-1}(\Om_{\textrm{\tiny bi}} \cup \Om_{\textrm{\tiny bp}})$.
  \item[{(S)}]  $\int |\log \cL_{\te^{-1}\om}(1) | dP(\om) < \infty$.
\end{itemize}
If  $\cL_\om (1)=1$ a.s., then the above condition (H) can be replaced by ($\hat{\hbox{H}}$) below, which differs only by integrability of the H\"older constant. 
\begin{itemize}
  \item[($\hat{\hbox{H}}$)] The potential $\varphi$ is locally H\"older continuous with index 2  and  $V_1^\om(\varphi) < \infty$ for a.e. $\om \in \te^{-1}(\Om_{\textrm{\tiny bi}} \cup \Om_{\textrm{\tiny bp}})$.
\end{itemize}
Note that by definition of local H\"older continuity, condition ($\hat{\hbox{H}}$) implies that the associated random variable $\kappa$ satisfies $\int \log \ka dP<\infty$. The reason behind that slightly more general integrability condition stems from the random variable $\lambda$ in theorem
\ref{theo:rpf} (see also equation (\ref{eq:spectral_gap})). If $\cL_\om (1)=1$, then automatically, $\lambda=1$ a.s. However, if 
 if  $\cL_\om (1) \neq 1$ on a set of positive measure, the first step in the construction of $\lambda$ is the construction of the pressure $P_G(\varphi)$ (as defined below) which depends on the almost subadditive ergodic theorem and therefore requires that $\log B_\om$ is integrable (see \cite{DenkerKiferStadlbauer:2008,Stadlbauer:2010}). 

\subsection{Contraction of Vasershtein distances} \label{sec:Vasershtein}
In order to apply lemma \ref{prop:main_estimates_1}, we now adapt the relevant variables and deduce their finiteness from the structure of $(X,T)$, where we assume that $(X,T)$ satisfies the b.i.p.-property.
For $\beta \in (0,1)$, $r$ given by the H\"older continuity of $\{\varphi_\om:\; \om \in \Om\}$ and $B_\om$ as in (\ref{eq:key}), define
\begin{eqnarray*} 
 \alpha_\om := B_{\om}/\beta, \quad \overline{d}_\om(x,y) := \min\{1, \alpha_\om d_r(x,y)\} 
\end{eqnarray*}
and let 
$n_\om$ be a $\bbN$-valued random variable such that $P$-a.s.,
\[n_\om \geq \lfloor  - \log \alpha_{\om} / \log r \rfloor + 1.\]
Furthermore, assume that $\mathbf{o}_\om $ is an $\bbN$-valued random variable such that $\mathbf{o}_\om \in \cW^1_\om$ (e.g., set $\mathbf{o}_\om := \min (\cW^1_\om)$). Note that, since $(X,T)$ is topologically mixing and $\te$ is ergodic,  
\begin{eqnarray*}
m_\om &:= & \min(\{n \geq 1:\; \te^{n}\om \in \Om_{\textrm{\tiny bp}}, \,
(A_{\om} A_{\te\om} \cdots A_{\te^{n-1}\om})_{\mathbf{o}_\om  j}\geq 1\, \forall j \in \mathcal{J} \cap \cW^1_{\te^{n-1}\om}
\}),
\end{eqnarray*}
is a.s. finite. Moreover, the big preimage property gives rise to the following argument. For 
each $x \in X_{\te^{m_\om}(\om)}$, there exists $u(x)=(u_0\ldots u_{m_\om -1}) \in \cW^{m_\om}$ such that $u_0= \mathbf{o}_\omega$, $u_{m_\om -1} \in \cI$ and $x \in T^{m_\om}_\om([u(x)]_\om)$. By choosing $u(x)$ minimal according to the lexicographic order, we obtain a finite and measurable family $\{u(x) : x \in   X_{\te^{m_\om}(\om)}\}$. In particular, 
 \begin{eqnarray}  \label{eq:bound from below - main proof}
 C_\om &:=& \inf_{x \in X_{\te^{m_\om}(\om)}}  e^{S_{m_\om}\varphi_{\om} (\tau_{u(x)}x)} >0 \quad \hbox{ and }\\
 \nonumber
 t_\om &:=& \max\left\{ \beta, 1- \frac{(1 -  r^{n_\om} \alpha_\om)C_{\te^{n_\om}(\om)}}{B_{\te^{n_\om}(\om)}} \right\} \in (0,1) \quad
  P\hbox{-a.s.}. 
 \end{eqnarray}       
 For $\om \in \Om$, we refer to  $W_\om$ as the Vasershtein distance with respect to the metric $\overline{d}_\om$ and to $\overline{D}_\om(f)$ as the Lipschitz constant with respect to $\overline{d}_\om$. In the context of random topological Markov chains, lemma \ref{prop:main_estimates_1}
is as follows.

\begin{lemma}\label{prop:main_estimates} Assume that ($\hat{\hbox{H}}$) holds, 
 that $f =\{f_\om\}$ is Hölder continuous with respect to the same parameter as $\varphi$ and that  $\mu =\{\mu_\om\}$ and $\nu =\{\nu_\om\}$ are random probability measures. 
For a.e. $\om \in \Om$, with $k_\om := n_\om + m_{\te^{n_\om}(\om)}$, we then have that 
\begin{enumerate}
\item $\overline{D}_{\te^{n_\om} (\om)}(\cL_\om^{n_\om}(f)) \leq \overline{D}_\om(f)$,
\item $\overline{D}_{\te^{k_\om} (\om)}(\cL_\om^{k_\om}(f)) \leq t_{\om} \cdot \overline{D}_\om(f)$,  
\item $W_\om((\cL^{n_\om})^\ast(\mu_{\te^{n_\om}(\om)}),(\cL^{n_\om})^\ast(\nu_{\te^{n_\om}(\om)})) \leq W_{\te^{n_\om}(\om)}(\mu_{\te^{n_\om}(\om)},\nu_{\te^{n_\om}(\om)})$,
\item $W_\om((\cL^{k_\om})^\ast(\mu_{\te^{k_\om}(\om)}),(\cL^{k_\om})^\ast(\nu_{\te^{k_\om}(\om)})) \leq t_\om \cdot  W_{\te^{k_\om}(\om)}(\mu_{\te^{k_\om}(\om)},\nu_{\te^{k_\om}(\om)})$.
\end{enumerate}
\end{lemma}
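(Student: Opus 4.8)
The plan is to transfer the proof of Lemma \ref{prop:main_estimates_1} to the random setting fibrewise, reading it through the dictionary $\Sig_k \leftrightarrow X_\om$, $\Sig_{k+n} \leftrightarrow X_{\te^n\om}$, with the sequence shift $k\mapsto k+1$ replaced by $\om \mapsto \te\om$ and the constants $B_k,\alpha_k,n_k,m_k,C_k,t_k,l_k$ replaced by their random counterparts $B_\om,\alpha_\om,n_\om,m_\om,C_\om,t_\om,k_\om$. For each fixed $\om$ the fibre $X_\om$ is a Polish space, the operators $\cL_\om$ act between consecutive fibres exactly as the $\cL_k$ act between the $\Sig_k$, and the distortion estimate (\ref{eq:key}) is precisely the random analogue of (\ref{eq:bounded_non_stationary_distortion}). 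Hence the purely analytic content of steps (1)--(4) of the proof of Lemma \ref{prop:main_estimates_1} applies on each fibre, and the work reduces to verifying that those constructions hold for almost every $\om$ and assemble into measurable families.

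Concretely, I would first record the random analogue of Proposition \ref{prop:L_leaves_invariant_the_Lipschitz_functions}: since $\cL_\om(1)=1$ a.s. (in force under ($\hat{\hbox{H}}$)) and $\varphi$ is locally $(r,2)$-Hölder, (\ref{eq:key}) gives, for a.e.\ $\om$ and $x,y\in X_{\te^n\om}$ with $d_r(x,y)\leq r$, the bound on $|\cL^n_\om(f)(x)-\cL^n_\om(f)(y)|$ with coefficient $\kappa_f r^n+\|f\|_\infty(B_{\te^n\om}-1)$, the proof being identical. Step (1), the local contraction, then follows exactly as in (\ref{eq:partial_lipschitz_coefficient}): the choice $n_\om\geq\lfloor-\log\alpha_\om/\log r\rfloor+1$ guarantees $r^{n_\om}\alpha_\om<1$ a.s., and $\alpha_{\te^{n_\om}\om}=B_{\te^{n_\om}\om}/\beta$ forces the relevant ratio to be at most $\beta\,\overline{D}_\om(f)$, giving assertion (i). As in the original step (1), one reduces to $\inf f_\om=0$ using $\cL_\om(c)=c$, so that $\|f_\om\|_\infty\leq\overline{D}_\om(f)$.

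The substance is step (2), the concentration of $(\cL^{k_\om}_\om)^\ast$-images of Dirac measures near the diagonal. For a.e.\ $\om$ one repeats the construction of the coupling $Q_{x,y}$ on $X_\om\times X_\om$ verbatim, splitting each branch $v=(v_1v_2)\in\cW^{k_\om}_\om$ with $v_1$ of length $n_\om$ and $v_2$ of length $m_{\te^{n_\om}\om}$ into $\phi_v^{(1)}$ and $\phi_v^{(2)}$, and using (\ref{eq:bound from below - main proof}) in place of (\ref{eq:bound from below - main proof - non stationary}) to obtain $Q_{x,y}(\Delta_\om)\geq C_{\te^{n_\om}\om}/B_{\te^{n_\om}\om}$ on the neighbourhood $\Delta_\om:=\{(z,z'):\overline{d}_\om(z,z')\leq r^{n_\om}\alpha_\om\}$. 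This yields $W_\om((\cL^{k_\om}_\om)^\ast(\delta_x),(\cL^{k_\om}_\om)^\ast(\delta_y))\leq 1-(1-r^{n_\om}\alpha_\om)C_{\te^{n_\om}\om}/B_{\te^{n_\om}\om}$, the term inside the maximum defining $t_\om$; combining with step (1) exactly as in step (3) gives assertions (ii) and (iv), while assertion (iii) follows from the simplified argument of step (4). The two points requiring the random hypotheses are the a.s.\ finiteness of $m_\om$ and the a.s.\ positivity of $C_\om$: the former holds because ergodicity of $\te$ makes a.e.\ orbit return to $\Om_{\textrm{\tiny bp}}$ while topological mixing supplies the connectivity $(A_\om\cdots A_{\te^{n-1}\om})_{\mathbf{o}_\om j}\geq 1$, so the defining minimum is attained; the latter follows from the finiteness of the branch family $\{u(x)\}$ (b.i.p.) together with $V_1^\om(\varphi)<\infty$ on $\te^{-1}(\Om_{\textrm{\tiny bi}}\cup\Om_{\textrm{\tiny bp}})$ from ($\hat{\hbox{H}}$), which bounds $e^{S_{m_\om}\varphi_\om}$ from below on each branch.

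The main obstacle, absent in the non-stationary case where every object carried a fixed integer index, is measurability in $\om$. I would check that $m_\om$, defined as a minimum of a measurable condition, is a measurable random variable; that the family $\{u(x)\}$, chosen lexicographically minimal as indicated, depends measurably on $(\om,x)$; and that the couplings $Q_{x,y}$ and the constants $C_\om,t_\om$ assemble into measurable families, using that the measurable structure on $\{(\om,x):x\in X_\om\}$ is induced from the product $\bbN^\bbN\times\Om$. Granting these measurability statements, all estimates of Lemma \ref{prop:main_estimates_1} hold for a.e.\ $\om$, and the four assertions follow.
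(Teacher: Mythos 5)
Your proposal is correct and is essentially the paper's own argument: the paper gives no separate proof of this lemma, relying precisely on the pathwise character of Lemma \ref{prop:main_estimates_1} together with the preceding construction and a.s.\ finiteness of the random constants $B_\om,\alpha_\om,n_\om,m_\om,C_\om,t_\om$. The only difference is that you worry more about measurable selection of the couplings than necessary; as the paper's remark notes, the assertions are pathwise inequalities for a.e.\ fixed $\om$, so only the measurability of the constants (established before the statement) is needed, not that of $Q_{x,y}$.
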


\noindent\textbf{Remark.}
 The pathwise character of lemma \ref{prop:main_estimates_1} admits in contrast to spectral theoretic methods like in \cite{DoeblinFortet:1937,TulceaMarinescu:1950,Hennion:1993} an immediate application in a fibered setting. Moreover, it is worth noting that a recent result of Zhang in \cite{Zhang:2013} establishes a fibrewise Monge-Kantorovich duality, which would allow, even though not necessary for the proof, to choose the couplings $Q_{x,y}$ and $Q$ in step (3) in a measurable way.

As an application of the proposition above, that is, through construction of $n_\om$ and $m_\om$, we obtain the key observation  of this note, giving rise to exponential decay of the Vasershtein distance $W$ with respect to the initial shift metric $d_r$. For completeness, the dual statement on decay of Lipschitz constants  also is included.

\begin{corollary} \label{cor:Markov_Version} Assume that $(X,T,\varphi)$ satisfies the b.i.p.-property, ($\hat{\hbox{H}}$) holds and that $\cL_\om(1)=1$ a.s. Then there exist constants $t \in (0,1)$, $c \in (0,\infty)$ and random sequences $(k_n(\om):n \in \bbN)$ and $(l_n(\om):n \in \bbN)$ such that 
\begin{enumerate}
\item for random probability measures $\mu, \nu$ and $n \in \bbN$, with $\mu^n_\om := (\cL_\om^n)^\ast (\mu_{\te^n \om})$, 
\begin{eqnarray*} 
 W( \mu^{k_n(\om)}_{\te^{-k_n(\om)}(\om)}, \nu^{k_n(\om)}_{\te^{-k_n(\om)}(\om)})
& \leq &   2B_\om \cdot t^n W(\mu_\om,\nu_\om), \\ 
 W ( \mu^{l_n(\om)}_{\om}, \nu^{l_n(\om)}_{\om})  
 &\leq &  c \cdot t^n W (\mu_{\te^{l_n(\om)}(\om)},\nu_{\te^{l_n(\om)}(\om)}),
\end{eqnarray*}
\item for a fibrewise Lipschitz continuous function $f$ and  $n \in \bbN$,
\begin{eqnarray*} 
{D}(\cL_{\te^{-k_n(\om)}(\om)}^{k_n(\om)}(f)) & \leq & 2B_\om \cdot
 t^n {D}(f_{\te^{-k_n(\om)}(\om)}),\\
 {D}(\cL_\om^{l_n(\om)}(f)) & \leq &   c \cdot t^n {D}(f_\om).
\end{eqnarray*}
\end{enumerate} 
If $B_\om$ is uniformly bounded, then the above is satisfied for $c:= 2\, \hbox{\rm ess-sup}_{\om\in \Om} (B_\om)$.
\end{corollary}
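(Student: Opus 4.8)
The plan is to deduce Corollary \ref{cor:Markov_Version} from Lemma \ref{prop:main_estimates} by iterating the one-step contraction estimates along the orbit of $\te$ and then using ergodicity to control the random return times. Recall that Lemma \ref{prop:main_estimates}(iv) gives, for a.e.\ $\om$ and $k_\om = n_\om + m_{\te^{n_\om}(\om)}$, the single-block contraction
\[ W_\om\bigl((\cL_\om^{k_\om})^\ast(\mu_{\te^{k_\om}\om}),(\cL_\om^{k_\om})^\ast(\nu_{\te^{k_\om}\om})\bigr) \leq t_\om\, W_{\te^{k_\om}\om}(\mu_{\te^{k_\om}\om},\nu_{\te^{k_\om}\om}), \]
with $t_\om \leq \max\{\beta, s_\om\} < 1$ a.s. First I would define the random sequence of return times by composing these blocks: set $l_0(\om):=0$ and inductively $l_{n+1}(\om) := l_n(\om) + k_{\te^{l_n(\om)}(\om)}$, so that $l_n(\om)$ is the time at which the $n$-th contraction block is completed. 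Iterating (iv) along this sequence then yields
\[ W_\om\bigl(\mu^{l_n(\om)}_\om,\nu^{l_n(\om)}_\om\bigr) \leq \Bigl(\prod_{j=0}^{n-1} t_{\te^{l_j(\om)}(\om)}\Bigr) W_{\te^{l_n(\om)}(\om)}\bigl(\mu_{\te^{l_n(\om)}\om},\nu_{\te^{l_n(\om)}\om}\bigr). \]
The sequence $(k_n(\om))$ should be defined symmetrically using the backward orbit, so that the first estimate in (i) measures contraction of measures pushed forward to the fibre over $\om$ from the fibre over $\te^{-k_n(\om)}\om$; the extra factor $2B_\om$ there will arise from passing between the fibre-dependent metric $\overline{d}_\om$ and the fixed shift metric $d_r$, since $\overline{d}_\om = \min\{1,\alpha_\om d_r\}$ with $\alpha_\om = B_\om/\beta$, so the two Vasershtein distances differ by a factor controlled by $\alpha_\om$ (hence by $B_\om$).

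The crux is to replace the multiplicative cocycle $\prod_{j=0}^{n-1} t_{\te^{l_j(\om)}(\om)}$ by a clean geometric bound $c\cdot t^n$ with a \emph{deterministic} $t\in(0,1)$. Here is where ergodicity of $\te$ enters decisively. Each factor $t_{\te^{l_j(\om)}(\om)}$ is bounded above by $\max\{\beta, s_{\te^{l_j(\om)}(\om)}\}$, and $\beta < 1$ is deterministic; the difficulty is that the $s_\om$ are random and not bounded away from $1$ uniformly in $\om$. The strategy is to take logarithms and apply the Birkhoff ergodic theorem to $\log t_\om$ along the induced (return-time) dynamics. Because the return-time map $\om \mapsto \te^{k_\om}(\om)$ is itself measure-preserving and ergodic (it is a first-return-type transformation built from returns to $\Om_{\textrm{\tiny bp}}$), the ergodic theorem gives
\[ \frac{1}{n}\sum_{j=0}^{n-1} \log t_{\te^{l_j(\om)}(\om)} \xrightarrow{n\to\infty} \int \log t_\om\, dP' < 0 \quad \text{a.s.}, \]
where $P'$ is the induced invariant measure; since $\log t_\om \leq \log\beta < 0$, the limiting integral is strictly negative, which yields $\prod_{j<n} t_{\te^{l_j(\om)}(\om)} \leq c(\om)\, t^n$ for a deterministic $t := e^{\int \log t\, dP'/2} \in (0,1)$ (say) and a random prefactor $c(\om)$ absorbing the transient fluctuations.

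The main obstacle I anticipate is precisely promoting the a.s.\ subexponential prefactor $c(\om)$ to the \emph{deterministic} constant $c$ claimed in the statement, which the corollary asserts holds whenever $B_\om$ is uniformly bounded, with the explicit value $c = 2\,\mathrm{ess\text{-}sup}_{\om}(B_\om)$. Under the uniform-boundedness hypothesis on $B_\om$, the quantities $\alpha_\om$, $C_\om$, and hence $s_\om$ become uniformly controlled, so $t := \mathrm{ess\text{-}sup}_\om\, t_\om < 1$ is deterministic and the product telescopes immediately to $t^n$ without invoking the ergodic theorem at all; the factor $2B_\om \leq 2\,\mathrm{ess\text{-}sup}(B_\om)$ then supplies $c$ directly from the metric-comparison step. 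The delicate point is thus to present the argument so that it simultaneously covers the general a.s.\ statement (where $t$ and $c$ exist but only $t$ is deterministic) and the uniformly-bounded case (where the explicit value of $c$ is available); I would organize the proof by first establishing the iteration and metric-comparison identities, which are purely algebraic consequences of Lemma \ref{prop:main_estimates}, and only afterwards invoke either ergodicity or uniform boundedness to extract the constants, keeping the two regimes cleanly separated.
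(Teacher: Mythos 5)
Your iteration scheme and your identification of the factor $2B_\om$ with the comparison $d_r\leq\overline{d}_\om\leq\alpha_\om d_r$ are both correct, but the mechanism you propose for extracting the constants does not prove the statement as written. The corollary asserts that $t$ \emph{and} $c$ are deterministic constants in general; the closing sentence about $B_\om$ uniformly bounded only supplies the explicit value $c=2\,\hbox{ess-sup}(B_\om)$, it is not the hypothesis under which $c$ becomes deterministic. Your route --- concatenating the blocks of Lemma \ref{prop:main_estimates} in their natural order and then taming the random product $\prod_j t_{\te^{l_j(\om)}(\om)}$ with the Birkhoff ergodic theorem for the induced transformation --- can only yield a bound $c(\om)\,t^n$ with a random prefactor absorbing the transient, and it likewise leaves the endpoint comparison factor $\alpha_{\te^{l_n(\om)}(\om)}$ uncontrolled. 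That argument is essentially the content of Corollary \ref{cor:Absolute_Markov_Version}, where the prefactor $c^\ast_\om$ is indeed random and the ergodic theorem is invoked; it does not give Corollary \ref{cor:Markov_Version}.

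The idea you are missing is that Lemma \ref{prop:main_estimates} leaves the waiting time $n_\om$ free above a lower bound, and this freedom should be spent on forcing returns to a good set rather than on averaging afterwards. Fix $\beta=1/2$ and choose $B\geq 1$, $C>0$ with $P(\Om_{B,C})>0$, where $\Om_{B,C}:=\{\om: B_\om\leq B,\ C_\om\geq C\}$, and set
\[
n_\om:=\min\left\{n\in\bbN:\ \te^{n}(\om)\in\Om_{B,C},\ n\geq \lfloor -\log(2\alpha_\om)/\log r\rfloor+1\right\},
\]
which is a.s.\ finite by ergodicity. Since $t_\om$ is built from $r^{n_\om}\alpha_\om\leq 1/2$ together with $C_{\te^{n_\om}(\om)}$ and $B_{\te^{n_\om}(\om)}$, every contraction block now satisfies $t_\om\leq 1-C/(2B)=:t<1$ deterministically, and the fibres $\te^{l_n(\om)}(\om)$ reached by the concatenated blocks lie in $\Om_{B,C}$, so $\alpha_{\te^{l_n(\om)}(\om)}\leq 2B=:c$ is deterministic as well; the product telescopes to $t^n$ with no ergodic theorem and no random prefactor. (The prefactor $2B_\om$ in the backward, $k_n$, estimates remains random because there the comparison of metrics happens at the arbitrary fibre $\om$.) The price is that the gaps $l_{n+1}-l_n$ are random and possibly long, which is exactly why the corollary is stated along subsequences; converting this to a rate in $n$ is deferred to Corollary \ref{cor:Absolute_Markov_Version}. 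Your ``uniformly bounded $B_\om$'' discussion is this argument in the special case $\Om_{B,C}=\Om$, but you need the positive-measure version to cover the general statement.
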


\begin{proof} We begin with the construction of $(l_n(\om))$. In order to apply lemma \ref{prop:main_estimates}, choose $\beta =1/2$ and $C>0$ and $B\geq 1$ such that $P(\Om_{B,C})>0$, where $\Om_{B,C} := \{\om \in \Om : B_\om \leq B,  C_\om \geq C\}$. Note that this implies that $\alpha_\om = 2 B_\om$.
 For 
\[n_\om := \min\left(\left\{ n \in \bbN :\; \te^n(\om) \in \Om_{B,C}, n \geq  \lfloor -\log(2 \alpha_\om)/\log r\rfloor +1 \right\}\right),\]
it follows that $t:= 1- C/2B \geq t_\om$ for $P$-a.e. $\om$.  
The sequence $l_n(\om)$ is now defined as follows. For $n=0$, set $l_0(\om):= n_\om$ and define $l_n$ by 
\[l_n(\om) := l_{n-1}(\om) + m_{\te^{l_{n-1}(\om)}(\om)}  + 
n_{\te^{l^\ast(n-1,\om)}(\om)},
\]
where  $ {l^\ast}(n-1,\om) = l_{n-1}(\om) + m_{\te^{l_{n-1}(\om)}(\om)}$. Hence, $\te^{l_n(\om)}(\om)\in \Om_{B,C}$ and $\alpha_{\te^{l_n(\om)}(\om)} \leq 2B := c$.  
In order to obtain $(l_n(\om))$, we adapt the above to negative powers of $\te$. That is, with  $\widetilde{\Om}_{B,C} := \{\te^{m_\om} : \om \in \Om_{B,C}\}$, define 
\begin{eqnarray*}
\widetilde{n}_\om &:= &  \min\left(\left\{ n \in \bbN :\; \te^{-n}(\om) \in \widetilde{\Om}_{B,C}, n \geq  \lfloor -\log(2 \alpha_{\te^{-n}(\om)})/\log r\rfloor +1 \right\}\right),\\
\widetilde{m}_\om &:= & \min\left(\left\{n : \te^{-n}\om \in \Om_{B,C}, n = m_{\te^{-n}(\om)}  \right\} \right), 
\hbox{ for } \om \in \widetilde{\Om}_{B,C}, \\
k_0(\om) &:= &  \widetilde{n}_\om + \widetilde{m}_{\te^{-\widetilde{n}_\om}(\om)}; \quad  k_n(\om) := k_{n-1}(\om) + \widetilde{n}_{\te^{-k_{n-1}(\om)}(\om)}  +  \widetilde{m}_{\te^{-k^\ast(n-1,\om)}(\om)}, \\
& & \hbox{with }   k^\ast(n-1,\om) = k_{n-1}(\om) + \widetilde{n}_{\te^{-k_{n-1}(\om)}(\om)}.
\end{eqnarray*}
The assertions of the corollary  follow from lemma \ref{prop:main_estimates} and 
$\alpha_\om d \geq \overline{d}_\om \geq d$
\end{proof}

For the statement of the corollary, the exponential decay was formulated with respect to the sequences $(k_n)$ and $(l_n)$ since this approach allows to explicitly construct the involved constants, which might be useful for questions concerning stochastic stability (as e.g. in \cite{ShenStrien:2013}). However, a slight modification of the construction allows to obtain a more classical formulation of exponential decay. 
For ease of exposition, the result only contains the statement with respect to the Vasershtein distance.
 
\begin{corollary} \label{cor:Absolute_Markov_Version} There exists $s \in (0,1)$ and a positive random variable $c_\om^\ast$ such that, for each pair of random probability measures $\mu, \nu$ and $n \in \bbN$, we have 
\[  W( \mu^{n}_{\te^{-n}(\om)}, \nu^{n}_{\te^{-n}(\om)})
 \leq   c_\om^\ast \cdot s^n W(\mu_\om,\nu_\om) \hbox{ and }
 W ( \mu^{n}_{\om}, \nu^{n}_{\om})  \leq
 c_\om^\ast \cdot s^n W(\mu_{\te^n(\om)},\nu_{\te^{n}(\om)}). \]
\end{corollary}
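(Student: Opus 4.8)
The plan is to upgrade the exponential decay along the return sequences $(k_n(\om))$ and $(l_n(\om))$ furnished by Corollary \ref{cor:Markov_Version} into genuine decay in the ordinary power $N$. Two ingredients will drive the argument: first, that these return sequences grow only linearly in $n$, so that by time $N$ one has already completed of order $N$ good returns; and second, that the bounded number of leftover steps beyond the last completed return expands the Vasershtein distance by a factor which is either uniformly bounded or, at worst, subexponential in $N$. The subexponential case will be the main obstacle and is handled by slightly enlarging the decay rate.

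For the first inequality I would fix $N$, set $n=n(N,\om):=\max\{m:\,k_m(\om)\le N\}$ and $j:=N-k_n(\om)\ge 0$, and split $\cL^N_{\te^{-N}\om}=\cL^{k_n}_{\te^{-k_n}\om}\circ\cL^{j}_{\te^{-N}\om}$, so that $\mu^N_{\te^{-N}\om}=(\cL^{j}_{\te^{-N}\om})^\ast\mu^{k_n}_{\te^{-k_n}\om}$ and likewise for $\nu$. By the Monge--Kantorovich duality \eqref{eq:KM-duality}, normalising an admissible $g$ so that $\|g\|_\infty\le 1$ (legitimate since $\mu,\nu$ are probability measures and the $d_r$-diameter of each fibre is at most $1$), one gets
\[ W\big((\cL^{j}_{\te^{-N}\om})^\ast a,(\cL^{j}_{\te^{-N}\om})^\ast b\big)\le\Big(\sup_{D(g)\le 1}D(\cL^{j}_{\te^{-N}\om}g)\Big)\,W(a,b). \]
The distortion estimate \eqref{eq:key} (the fibrewise analogue of Proposition \ref{prop:L_leaves_invariant_the_Lipschitz_functions}) bounds this supremum by $r^{j}+(B_{\te^{-k_n}\om}-1)$ up to the harmless cross-cylinder bound $2$, the constant being evaluated at the \emph{target} fibre $\te^{-k_n}\om$ alone. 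Since $\te^{-k_n(\om)}\om\in\Om_{B,C}$ by construction, this factor is at most $\max(B,2)=:E$, a genuine constant, and Corollary \ref{cor:Markov_Version}(i) yields
\[ W(\mu^N_{\te^{-N}\om},\nu^N_{\te^{-N}\om})\le E\cdot 2B_\om\,t^{\,n(N,\om)}\,W(\mu_\om,\nu_\om). \]

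It then remains to replace $t^{\,n(N,\om)}$ by a multiple of $s^N$. Here the increments $k_{m+1}-k_m$ are first-return times to sets of positive measure ($\widetilde\Om_{B,C}$, $\Om_{B,C}$ and $\Om_{\textrm{\tiny bp}}$), hence have finite mean by Kac's formula, so the ergodic theorem applied to the induced transformation gives $k_m(\om)/m\to L<\infty$ a.s.; consequently $n(N,\om)/N\to 1/L$ and there is a constant $a>0$ with $n(N,\om)\ge aN-b_\om$ for an a.s.-finite $b_\om$. Thus $t^{\,n(N,\om)}\le t^{-b_\om}(t^a)^N$, and with $s_0:=t^a\in(0,1)$ and $c^\ast_\om:=2EB_\om t^{-b_\om}$ the first inequality follows with $s=s_0$.

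For the second inequality I would run the identical argument with $(l_n(\om))$ in place of $(k_n(\om))$, writing $N=l_n(\om)+j$ and $\mu^N_\om=(\cL^{l_n}_\om)^\ast(\cL^{j}_{\te^{l_n}\om})^\ast\mu_{\te^N\om}$. The sole—and hardest—difference is that the leftover operator $\cL^{j}_{\te^{l_n}\om}$ now has target fibre $\te^N\om$, which need not lie in $\Om_{B,C}$, so its expansion factor is only bounded by $\max(B_{\te^N\om},2)$, an unbounded quantity. This is resolved using $\int\log B_\om\,dP<\infty$ from \eqref{eq:key}: the ergodic theorem gives $\tfrac1N\log B_{\te^N\om}\to 0$ a.s., whence $\max(B_{\te^N\om},2)=e^{o(N)}$. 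Choosing any $s\in(s_0,1)$ absorbs this subexponential factor, since $e^{o(N)}s_0^{\,N}\le c''_\om\,s^N$ for an a.s.-finite $c''_\om$; enlarging $c^\ast_\om$ accordingly and taking this common $s$ for both inequalities completes the proof, the a.s.-finiteness and measurability of $c^\ast_\om$ being inherited from those of $B_\om$, $b_\om$ and the orbit averages of $\log B$.
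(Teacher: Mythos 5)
Your overall strategy is the same as the paper's: contract along the return sequences of Corollary \ref{cor:Markov_Version}, show those sequences grow linearly, bound the expansion caused by the leftover steps, and absorb everything into a random constant. The decomposition $\cL^N=\cL^{k_n}\circ\cL^{j}$, the duality bound $W((\cL^j)^\ast a,(\cL^j)^\ast b)\le\sup_{D(g)\le1}D(\cL^jg)\cdot W(a,b)$, and the observation that the leftover factor is controlled by $B$ at the \emph{target} fibre are all correct and in fact more explicit than what the paper writes down.

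The genuine gap is in the linear-growth step. You reuse the sequences $(k_n)$, $(l_n)$ exactly as constructed in Corollary \ref{cor:Markov_Version} and assert that their increments ``are first-return times to sets of positive measure, hence have finite mean by Kac's formula.'' They are not. The increment $l_{n}-l_{n-1}=m_{\te^{l_{n-1}}\om}+n_{\te^{l^\ast}\om}$ consists of (a) the hitting time $m_{\om'}$ of $\Om_{\textrm{\tiny bp}}$ (with a connectivity constraint) started from a point $\om'\in\Om_{B,C}$, which is a hitting time from \emph{outside} the target set and need not be integrable, and (b) the delayed entrance time $n_{\om''}$, whose built-in delay is of order $\log B_{\om''}/|\log r|$ evaluated at $\om''=\te^{m_{\om'}}\om'$, a point at which $B$ is not controlled by membership of $\om'$ in $\Om_{B,C}$. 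Kac's formula gives integrability only of return times of a set to itself, so nothing forces $k_m(\om)/m$ to converge to a finite limit; if the increments are non-integrable the induced Birkhoff average diverges and your lower bound $n(N,\om)\ge aN-b_\om$ fails. This is precisely why the paper does \emph{not} reuse the sequences of Corollary \ref{cor:Markov_Version}: it shrinks the return set to $\Om_{B,C,M}=\{B_\om\le B,\,C_\om\ge C,\,m_\om\le M,\,B_{\te^{m_\om}\om}\le B\}$, sets $K=\lfloor-\log(2B)/\log r\rfloor+1$, and defines $l_k$ through the $(M+K)k$-th entrance times to this set, so that each block automatically accommodates $m_\om\le M$ and a delay $\ge K$, and the growth rate $l_k/k\to(M+K)/P(\Om_{B,C,M})$ comes from the plain Birkhoff theorem for visit frequencies rather than from integrability of hitting times. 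Your argument becomes correct once you make the analogous modification. A secondary caveat: your treatment of the leftover factor $B_{\te^N\om}=e^{o(N)}$ in the second inequality uses $\int\log B_\om\,dP<\infty$, which requires $\int\ka\,dP<\infty$ (condition (H)) and is not guaranteed under ($\hat{\hbox{H}}$) alone, the hypothesis under which Corollary \ref{cor:Markov_Version} is stated; this point deserves a remark, although it is harmless in the applications where the corollary is invoked for the normalised operator.
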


\begin{proof} Similar to the proof of corollary \ref{cor:Markov_Version}, choose $B,C,M$ such that 
\[\Om_{B,C,M} := \{\om \in  \Om : B_\om \leq B,  C_\om \geq C, m_\om \leq M, B_{\te^{m_\om} \om} \leq B\} \]
has positive measure. Furthermore, set $K:=\lfloor -\log(2 B)/\log r\rfloor +1$ and let $\eta_k^\om$ 
refer to the $k$-th entrance time to $\Om_{B,C,M}$, that is
\[ \eta^\om_k := \min\left(\left\{ n_{k} \in \bbN :\; 
\exists \; 0 < n_1 < \cdots < n_{k} \hbox{ s.t. } \te^{n_i}(\om) \in \Om_{B,C,M}  \hbox{ for } i=1, \ldots k 
 \right\}\right).\]
Since $\eta^\om_{k} \geq k $ for all $k \in \bbN$, it follows that for a.e. $\om \in \Om$, $n_{\te^{m_\om}(\om)} := \eta^\om_{M+K} - m_\om \geq K$. Hence, lemma \ref{prop:main_estimates} is applicable to each transition from $l_k(\om)$ to $l_{k+1}(\om)$, where $l_k(\om)$ is defined by $l_k(\om):= \eta^\om_{(M+K)k} + m_{\te^{\eta^\om_{(M+K)k}}\om}$. For $s= 1- C/2B$ as in proof above, it follows from the comparability of $d$ and $\overline{d}_\om$, that for a.e. $\om$,  
\[ W((\cL_{\te^{m_\om}(\om)}^{l_k(\om) - m_\om})^\ast \mu_{\te^{l_k(\om)}\om},(\cL_{\te^{m_\om}(\om)}^{l_k(\om) - m_\om})^\ast\nu_{\te^{l_k(\om)}\om}) \leq   2B s^k W(\mu_{\te^{l_k(\om)}\om}, \nu_{\te^{l_k(\om)}\om}).\]  
In order to obtain a rate with respect to $n$ as in the statement of the corollary, choose $k$ such that $l_k \leq n <l_{k+1}$. By applying the ergodic theorem twice, it follows that 
\[ \frac{1}{n}\log s^k = \frac{l_k}{n} \frac{k}{l_k} \log s \xrightarrow{k,n \to \infty} \frac{1}{(M+K)P(\Om_{B,C,M})} \log s.  \]
Moreover, note that substituting $k$ with $k+1$ does not change the limit.  
Hence, for each $t \in (0,1)$ with $\log t > (\log s)/((M+K)P(\Om_{B,C,M}))$, there is a random variable $c^\ast$ such that 
\[ W((\cL_{\om}^{\te^n\om})^\ast \mu_{\te^{n}\om},(\cL_{\om}^{\te^n\om})^\ast \nu_{\te^{n}\om}) 
\leq   c^\ast_\om t^n W(\mu_{\te^{n}\om}, \nu_{\te^{n}\om}).
\] 
The remaining assertion follows from the same arguments by considering the $(M+K)$-th entrances to $\Om_{B,C,M}$ with respect to $\te^{-1}$ instead of $\te$. 
\end{proof}

\section{The random version of Ruelle's theorem} \label{sec:3}
We now apply the above results in order to obtain exponential decay in Ruelle's theorem. We first recall the basic results from \cite{Stadlbauer:2010} for $(X,T,\Om,P,\te)$ with the b.i.p.-property and conditions (H) and (S). For a given $a \in \cW^1$ and a measurable family $\{\xi_\om \in [a]_\om : \om \in \Om\}$, the \emph{$n$-th local preimage function} is defined by 
\[\cZ_n^\om(a) :=  \cL_{\om}^n(1_{[a]})(\xi_{\te^n\om}),\quad \forall \om \in \Om_a \cap \te^{-n}(\Om_a).  \]
The \emph{relative Gurevi\v{c} pressure} $P_G(\phi)$ is then defined as follows. Due to the restriction on $\om$ and $n$ in the definition of the local preimage function, the classical definition of pressure has to be modified as a limit along a subsequence given by returns with respect to the base. More precisely,  
for $\Om' \subset \Om$ of positive measure and $\om \in \Om$, define $J_\om({\Om'}) := \{ n \in \bbN \with \te^n\om \in \Om' \}$. For $N\in \bbN$ such that $\Om^\ast:= \{\om \in \Om_a \with N_{aa}(\om) \leq N \}$ is a set of positive measure, the Gurevic pressure of $(X,T,\varphi)$ is then defined by
\begin{equation} \label{def:gurevic_via_sup}
P_G(\varphi) := \lim_{{n \to \infty, n \in J_\om({\Om^\ast})}} \frac{1}{n} \log \cZ_n^\om(a).
\end{equation}
By Theorem 3.2 in \cite{DenkerKiferStadlbauer:2008}, the above limit exists, is finite, $P$-almost surely constant for $\om \in \Om_a$ and independent of the choices of $a$ and $N$. 
Combining  Theorem 4.1 in \cite{Stadlbauer:2010} with Corollaries \ref{cor:Markov_Version} and \ref{cor:Absolute_Markov_Version} above then allows to obtain the main result of this note. In order to state the theorem, we define the following random Lipschitz norm with respect to $d_r$, where $r$ is given by the Hölder parameter of $\varphi$. For $\om \in \Om$ and $f_\om : X_\om  \to \bbR$ with $D_\om(f) < \infty$, set 
\[ \|f_\om\|_L^\om := D_\om(f_\om) + \| f_\om\|_\infty.\] 
Furthermore, for the statement of the theorems, let  $c$, $c^\ast_\om$ be as in corollaries   \ref{cor:Markov_Version} and \ref{cor:Absolute_Markov_Version} and, for a given random variable $\la:\Om \to (0,\infty)$, define $\La_k(\om):= \prod_{i=0}^{k-1}\la_{\te^i(\om)}$.
\begin{theorem} \label{theo:rpf}  Assume that $(X,T,\phi)$ satisfies the b.i.p.-property and (H) and (S) hold. Then there exist a random variable $\lambda:\Om \to \bbR$ with $\int \log \lambda_\om dP = P_G(\varphi)$, a measurable family of functions $\{h_\om : \om \in \Om\}$ and a random probability measure $\{\mu_\om: \om \in \Om\}$ such that the following holds.
\begin{enumerate}
\item For a.e. $\om \in \Om$, $h_\om:X_\om \to \bbR$ is a strictly positive function satisfying $\cL_\om  h_\om=\la_\om h_{\te\om}$ and  $\int h_\om d \mu_\om=1$.
\item $\{\log h_\om\}$ is H\"older continuous with H\"older constant of index 1 bounded by $B_\om-1$ and the same Hölder parameter as $\varphi$.
\item For a.e. $\om \in \Om$, $\cL_\om^\ast(\mu_{\te\om}) = \la_\om\mu_\om$.
\item The probability measure given by $h_\om d\mu_\om dP$ is $T$-invariant and ergodic.
\item $\{\mu_\om\}$ is the unique random probability measure with $\cL_\om^\ast(\mu_{\te\om}) = \la_\om \mu_\om$ a.s. Furthermore, $\{h_\om \}$ 
is, up to scalar multiplication, the unique positive, measurable and non-trivial function with  $\cL_\om  h_\om= \la_\om h_{\te\om}$ a.s.
\item There exists $t \in (0,1)$, a positive random variable $K$ and a random sequence $(l_n(\om):n \in \bbN)$ such that for each fibrewise Lipschitz continuous function $f=\{f_\om\}$, $n \in \bbN$ and a.e. $\om \in \Om$,
\begin{eqnarray*} 
 \left\| \frac{\cL_\om^{l_n(\om)} (f_\om)}{ \La_{l_n}(\om) h_{\te^{l_n}(\om)} } - \int f   d\mu_\om \right\|_L^{\te^{l_n}\om}
 & \leq &  c K_\om \cdot t^n \|f \|^\om_L.
\end{eqnarray*}
\item There exists $s \in (0,1)$ such that for each fibrewise Lipschitz continuous function $f=\{f_\om\}$, $n \in \bbN$ and a.e. $\om \in \Om$,
\begin{eqnarray*} 
 \left\| \frac{\cL_\om^{n} (f)}{ \La_{n}(\om) h_{\te^{n}(\om)} } - \int f d\mu_\om \right\|_L^{\te^{n}\om}
 & \leq &  c^\ast_\om  K_\om \cdot s^n \|f \|^\om_L.
\end{eqnarray*} 
\end{enumerate} 
\end{theorem}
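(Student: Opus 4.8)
The plan is to read off assertions (i)--(v) directly from Theorem 4.1 of \cite{Stadlbauer:2010}, which holds verbatim under the b.i.p.-property together with (H) and (S), and then to derive the two decay statements (vi) and (vii) by conjugating the Ruelle operator into a normalized one to which Corollaries \ref{cor:Markov_Version} and \ref{cor:Absolute_Markov_Version} apply. Given $\{\la_\om\}$, $\{h_\om\}$, $\{\mu_\om\}$ from (i)--(v), I would introduce the normalized transfer operator
\[\widehat{\cL}_\om(g) := \frac{1}{\la_\om\, h_{\te\om}}\,\cL_\om(h_\om\, g),\]
which satisfies $\widehat{\cL}_\om(1)=1$ because $\cL_\om h_\om = \la_\om h_{\te\om}$, and whose potential is $\widehat\varphi_\om = \varphi_\om + \log h_\om - \log h_{\te\om}\circ T_\om - \log\la_\om$. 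A direct induction gives the conjugacy
\[\widehat{\cL}_\om^n(g) = \frac{1}{\La_n(\om)\, h_{\te^n\om}}\,\cL_\om^n(h_\om\, g),\]
so that, writing $g:=f/h_\om$, the quantity inside the norm in (vi) and (vii) equals $\widehat{\cL}_\om^n(g) - \int g\, d\widehat\mu_\om$, where $\widehat\mu_\om := h_\om\,\mu_\om$ is $\widehat{\cL}^\ast$-invariant (a one-line check using $\cL_\om^\ast\mu_{\te\om}=\la_\om\mu_\om$ and $\int h_\om\,d\mu_\om=1$) and $\int g\,d\widehat\mu_\om = \int f\,d\mu_\om$.

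The second step is to verify that $(X,T,\widehat\varphi)$ satisfies the hypotheses of the corollaries, namely $\widehat{\cL}_\om(1)=1$ a.s.\ (already secured) and property ($\hat{\hbox{H}}$), the b.i.p.-property being a property of $(X,T)$ alone and hence inherited. Here I would invoke (ii): since $\log h_\om$ is Hölder of index $1$ with constant at most $B_\om-1$, and composition with $T_\om$ raises the index by one, one has $V_n^\om(\log h_{\te\om}\circ T_\om)=V_{n-1}^{\te\om}(\log h_{\te\om})\le (B_{\te\om}-1)r^{n-1}$ for $n\ge 2$. Together with the index-$2$ Hölder continuity of $\varphi$ this makes $\widehat\varphi$ locally Hölder of index $2$ with constant $\widehat\ka(\om)\le \ka(\om)+(B_\om-1)+(B_{\te\om}-1)/r$, whose logarithm is integrable by (H) and the integrability of $\log B_\om$; finiteness of $V_1^\om(\widehat\varphi)$ on $\te^{-1}(\Om_{\mathrm{bi}}\cup\Om_{\mathrm{bp}})$ follows from the corresponding hypothesis on $\varphi$ once the fibrewise oscillation of $\log h_\om$ is known finite, which the b.i.p.-structure provides. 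With ($\hat{\hbox{H}}$) in hand, Corollary \ref{cor:Markov_Version} yields contraction of $(\widehat{\cL}^n)^\ast$ along the sequence $(l_n(\om))$ with uniform constants $c,t$, and Corollary \ref{cor:Absolute_Markov_Version} contraction in every power $n$ with constants $c^\ast_\om,s$, both in the Vasershtein distance $W$ built from $d_r$.

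The final step translates these Vasershtein estimates into the Lipschitz-norm bounds. For $x\in X_{\te^n\om}$ I would write $\widehat{\cL}_\om^n(g)(x)-\int g\,d\widehat\mu_\om = \int g\, d\big((\widehat{\cL}_\om^n)^\ast\delta_x-(\widehat{\cL}_\om^n)^\ast\widehat\mu_{\te^n\om}\big)$, apply the Monge--Kantorovich duality (\ref{eq:KM-duality}), and bound the right-hand side by $D_\om(g)$ times the Vasershtein distance, which the corollary controls by $c^\ast_\om s^n\,W(\delta_x,\widehat\mu_{\te^n\om})\le c^\ast_\om s^n$ since $d_r\le 1$; this handles the supremum norm. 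The same duality applied to the pair $(\delta_x,\delta_y)$ bounds $D_{\te^n\om}(\widehat{\cL}_\om^n(g))$ by $c^\ast_\om s^n D_\om(g)$, and subtracting a constant leaves $D_{\te^n\om}$ unchanged. It remains to absorb the conjugation by $h_\om$: from $D_\om(f/h_\om)\le (\inf h_\om)^{-1}D_\om(f)+ D_\om(h_\om)(\inf h_\om)^{-2}\|f\|_\infty \le K_\om\|f\|_L^\om$, with $K_\om$ assembled from the bounds on $h_\om$ furnished by (ii) and the normalization $\int h_\om\,d\mu_\om=1$, one obtains (vii); the argument for (vi) is identical, with $c^\ast_\om,s$ replaced by the uniform $c,t$ of Corollary \ref{cor:Markov_Version} along $(l_n(\om))$.

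I expect the main obstacle to be the verification of ($\hat{\hbox{H}}$) for the normalized potential, specifically controlling the fibrewise oscillation of $\log h_\om$ so that $V_1^\om(\widehat\varphi)$ is finite on $\te^{-1}(\Om_{\mathrm{bi}}\cup\Om_{\mathrm{bp}})$ and that $\int\log\widehat\ka\,dP<\infty$, together with showing that the random constant $K_\om$ is almost surely finite and, when $B_\om$ is uniformly bounded, can be chosen uniform; the measurability of all objects constructed along the way must also be tracked, although by the remark following Lemma \ref{prop:main_estimates} this does not require the fibrewise Monge--Kantorovich duality.
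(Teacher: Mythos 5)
Your treatment of (vi) and (vii) follows the paper's own route essentially step for step: pass to the normalized operator $\tilde{\cL}_\om$ with potential $\varphi_\om+\log h_\om-\log h_{\te\om}\circ T_\om-\log\la_\om$, check ($\hat{\hbox{H}}$) and $\tilde{\cL}_\om(1)=1$, apply Corollaries \ref{cor:Markov_Version} and \ref{cor:Absolute_Markov_Version} together with the Monge--Kantorovich duality to get sup-norm and Lipschitz decay, and then absorb the conjugation via the estimate on $D_\om(f/h_\om)$ that produces $K_\om$. That part is sound and is exactly what the paper does in steps (3) and (4) of Section \ref{sec:4}, including your identity (\ref{eq:identity_normalized_vs_nonnormalized}) and the shape of $K_\om$ in (\ref{def:k^ast}).

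The genuine gap is in your first sentence: you cannot read off (i)--(v) ``verbatim'' from Theorem 4.1/4.7 of \cite{Stadlbauer:2010}. That theorem supplies $\la$ and $\{\mu_\om\}$ (parts (iii) and (iv)), but it does not give part (ii) with the explicit Hölder constant $B_\om-1$ for $\log h_\om$, nor the two-sided bounds $0<\inf h_\om\le\sup h_\om<\infty$ with quantitative control, nor the uniqueness statements in (v). This matters because everything downstream in your own argument consumes precisely these quantitative facts: your bound $V_n^\om(\log h_{\te\om}\circ T_\om)\le (B_{\te\om}-1)r^{n-1}$, the integrability of $\log\widehat\ka$, the finiteness of $V_1^\om(\widehat\varphi)$, and the assembly of $K_\om$ all require the constant $B_\om-1$ and the explicit upper and lower bounds on $h_\om$. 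The paper therefore reconstructs $h_\om$ from scratch as $\liminf_{k\to\infty,\,\te^{-k}\om\in\Om^\ast}\La_k(\te^{-k}\om)^{-1}\cL^k_{\te^{-k}\om}(1)$ along returns to a good set, using the distortion estimate (\ref{eq:key}) to get the Hölder constant $B_\om-1$, the Gibbs property (\ref{eq:Gibbs-property}) to get the upper bound $B_\om(E_\om)^{-1}$ on $\Om_{\hbox{\tiny bi}}$, the b.i.p.-structure for the lower bound, and Fatou plus ergodicity to upgrade $\cL_\om(h_\om)\le\la_\om h_{\te\om}$ to equality. Likewise, the uniqueness in (v) is not imported: it is deduced in step (5) from the contraction in Theorem \ref{theo:rpf-normalized} (any two conformal families yield two $\tilde{\cL}$-invariant random measures, which the Vasershtein contraction forces to coincide). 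You flagged the control of the oscillation of $\log h_\om$ as the ``main obstacle'' but then delegated it to the cited theorem; in fact it is a substantive piece of the proof that has to be carried out, and your proposal as written does not contain it.
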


The proof of the theorem makes use of a normalised version of Ruelle's operator, which is necessary in order to obtain contraction of the  Vasershtein metric. That is, with $\{h_\om\}$ and $\lambda$ as in the theorem, let $\tilde{\cL}_\om$  refer to the Ruelle operator with respect to the potential 
\[\tilde{\varphi}_\om (x):=  \varphi_\om(x) + \log h_\om(x) - \log h_{\te\om}(T_\om(x)) - \log \lambda_\om.\]
As an application of corollaries \ref{cor:Markov_Version}  
 and \ref{cor:Absolute_Markov_Version} we obtain the following theorem.

 \begin{theorem} \label{theo:rpf-normalized}  Assume that $(X,T,\phi)$ satisfies the b.i.p.-property and (H) and (S) hold. Then there exist a random probability measure $\{\nu_\om: \om \in \Om\}$, constants $t \in (0,1)$, $c \in (0,\infty)$ and random sequences $(k_n(\om):n \in \bbN)$ and $(l_n(\om):n \in \bbN)$ such that for each fibrewise Lipschitz continuous function $f=\{f_\om\}$, $n \in \bbN$ and a.e. $\om \in \Om$,
\begin{eqnarray*}
\left\|\tilde{\cL}_\om^{l_n(\om)} (f) - \int f   d\nu_\om \right\|_L^{\te^{l_n}\om}
 & \leq &  2 c \cdot t^n {D}_\om(f), \hbox{ and }\\
\left\|\tilde{\cL}_{\te^{-k_n(\om)}(\om)}^{k_n(\om)}(f) - \int f  d\nu_{\te^{-k_n(\om)}(\om)} \right\|_L^\om  & \leq & 4 B_\om \cdot
 t^n {D}_{\te^{-k_n(\om)}(\om)}(f).
\end{eqnarray*}
Furthermore, $\{\nu_\om: \om \in \Om\}$ is the unique measure with $\tilde{\cL}^\ast_{\om}(\nu_{\te \om})=\nu_\om$.
\end{theorem}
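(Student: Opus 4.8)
The plan is to transfer the statement to the normalised operator $\tilde{\cL}$ and then read it off from Corollaries \ref{cor:Markov_Version} and \ref{cor:Absolute_Markov_Version}. The starting point is the rewriting
\[ \tilde{\cL}_\om(f)(x) = \frac{\cL_\om(f\, h_\om)(x)}{\la_\om\, h_{\te\om}(x)}, \]
from which $\tilde{\cL}_\om(1)=1$ is exactly the eigenfunction identity $\cL_\om h_\om = \la_\om h_{\te\om}$ of Theorem \ref{theo:rpf}(i). Writing $d\nu_\om := h_\om\, d\mu_\om$ for the data $\{h_\om\},\{\mu_\om\}$ of Theorem \ref{theo:rpf} and combining the rewriting with $\cL_\om^\ast(\mu_{\te\om})=\la_\om\mu_\om$ from Theorem \ref{theo:rpf}(iii), one checks $\tilde{\cL}_\om^\ast(\nu_{\te\om})=\nu_\om$ by testing against $f$:
\[ \int \tilde{\cL}_\om(f)\, d\nu_{\te\om} = \la_\om^{-1}\int \cL_\om(f\,h_\om)\, d\mu_{\te\om} = \la_\om^{-1}\int f\,h_\om\, d(\cL_\om^\ast\mu_{\te\om}) = \int f\, d\nu_\om. \]
Since $h_\om>0$ and $\int h_\om\, d\mu_\om=1$, the family $\{\nu_\om\}$ is a random probability measure with support $X_\om$, which is the invariant measure claimed.

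Before invoking the corollaries I would verify that the normalised system satisfies their hypotheses, i.e.\ that $\tilde\varphi$ fulfils $(\hat{\hbox{H}})$ (the b.i.p.-property depends only on $(X,T)$ and is unchanged). For local Hölder continuity I would bound $V^\om_n(\tilde\varphi)$ by the triangle inequality, noting that $\log h_{\te\om}\circ T_\om$ has $n$-th variation equal to the $(n-1)$-st variation of $\log h_{\te\om}$; together with the index-$1$ Hölder bound $B_\om-1$ on $\log h_\om$ from Theorem \ref{theo:rpf}(ii) and the index-$2$ local Hölder continuity of $\varphi$, this yields a Hölder constant $\tilde\kappa(\om)$ dominated by a constant multiple of $\kappa(\om)+B_\om+r^{-1}B_{\te\om}$, whose logarithm is integrable by (\ref{eq:key}) and the $\te$-invariance of $P$. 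Finiteness of $V^\om_1(\tilde\varphi)$ on $\te^{-1}(\Om_{\textrm{\tiny bi}}\cup\Om_{\textrm{\tiny bp}})$ follows from the corresponding hypothesis on $\varphi$ in $(\hat{\hbox{H}})$. As $\tilde{\cL}_\om(1)=1$ as well, Corollaries \ref{cor:Markov_Version} and \ref{cor:Absolute_Markov_Version} apply to $\tilde{\cL}$.

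I would then obtain the two displayed bounds by splitting $\|\cdot\|_L = D(\cdot)+\|\cdot\|_\infty$. The Lipschitz-constant part is immediate from Corollary \ref{cor:Markov_Version}(2) applied to $\tilde{\cL}$, namely $D(\tilde{\cL}_\om^{l_n(\om)}(f))\leq c\,t^n D_\om(f)$ and $D(\tilde{\cL}_{\te^{-k_n(\om)}(\om)}^{k_n(\om)}(f))\leq 2B_\om\,t^n D_{\te^{-k_n(\om)}(\om)}(f)$, because subtracting the constant $\int f\, d\nu$ does not change $D$. For the supremum part I would use the invariance of $\nu$ to write $\int f\, d\nu_\om = \int \tilde{\cL}_\om^{l_n(\om)}(f)\, d\nu_{\te^{l_n}\om}$ and estimate, with $g:=\tilde{\cL}_\om^{l_n(\om)}(f)$,
\[ \Bigl| g(x)-\int g\, d\nu_{\te^{l_n}\om}\Bigr| \leq \int |g(x)-g(y)|\, d\nu_{\te^{l_n}\om}(y) \leq D_{\te^{l_n}\om}(g), \]
since $d_r\leq 1$ on a shift space. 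Hence the supremum part is controlled by the same quantity as the Lipschitz part, and the two contributions combine to the constants $2c$ and $4B_\om$ of the statement. Uniqueness of $\{\nu_\om\}$ follows from Corollary \ref{cor:Absolute_Markov_Version}: for two $\tilde{\cL}^\ast$-invariant random probability measures $\nu,\nu'$ one has $W_\om(\nu_\om,\nu'_\om)\leq c^\ast_\om\,s^n\,W_{\te^n\om}(\nu_{\te^n\om},\nu'_{\te^n\om})\leq c^\ast_\om\,s^n\to 0$, using $\overline{d}_{\te^n\om}\leq 1$ so that $W_{\te^n\om}\leq 1$; thus $\nu_\om=\nu'_\om$ almost surely.

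The step I expect to demand the most care is the verification of $(\hat{\hbox{H}})$ for $\tilde\varphi$ together with the bookkeeping of constants: the $B_\om$ and $c$ produced by the corollaries are governed by the Hölder data of $\tilde\varphi$ rather than of $\varphi$, so one must confirm that the $B$'s entering the final bounds are those of the normalised potential and that they retain the integrability needed for the ergodic-theoretic constructions of $(n_\om)$ and $(m_\om)$ underlying Corollaries \ref{cor:Markov_Version} and \ref{cor:Absolute_Markov_Version}.
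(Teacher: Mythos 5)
Your proposal is correct and follows essentially the same route as the paper: define $d\nu_\om = h_\om\,d\mu_\om$, verify that the normalised potential $\tilde\varphi$ satisfies $(\hat{\hbox{H}})$ so that Corollaries \ref{cor:Markov_Version} and \ref{cor:Absolute_Markov_Version} apply to $\tilde{\cL}$, and split $\|\cdot\|_L$ into its Lipschitz and supremum parts. The only cosmetic difference is in the supremum part, where the paper bounds $|\tilde{\cL}_\om^{l_n}(f)(x)-\int f\,d\nu_\om|$ by the Vasershtein distance $W\bigl((\tilde{\cL}_\om^{l_n})^\ast\delta_x,(\tilde{\cL}_\om^{l_n})^\ast\nu_{\te^{l_n}\om}\bigr)$ via Monge--Kantorovich duality and part (i) of Corollary \ref{cor:Markov_Version}, whereas you use the dual estimate $|g(x)-\int g\,d\nu|\le D(g)$ together with part (ii); these are equivalent.
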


\noindent\textbf{Remark 1.} \label{ref:dual statements} Note that the second estimate in theorem \ref{theo:rpf-normalized} also could be reformulated in terms of $\cL_\om$ through identity (\ref{eq:identity_normalized_vs_nonnormalized}). Namely, it immediately follows from the proof that a.s., for each  Lipschitz continuous function $\{f_\om\}$,  
\begin{eqnarray*} 
 \left\| \frac{\cL_{\te^{-n}(\om)}^{n}(f)}{\La_{n}(\te^{-n}(\om)) h_{\te^{-n}(\om)}}  - \int f  d\mu_{\te^{-n}(\om)} \right\|_L^\om  & \leq & c^\ast_\om  K_{\te^{-n}(\om)} \cdot
 s^n \|f \|^{\te^{-n}(\om)}_L,
\end{eqnarray*} 
where $s$,and $K$ are given by theorem \ref{theo:rpf}. Moreover, there exists a random sequence $(k_n(\om):n \in \bbN)$ such that, with $t$ as in theorem \ref{theo:rpf} and using  $\alpha(\om) := 2B_\om$,
\begin{eqnarray*} 
 \left\| \frac{\cL_{\te^{-k_n}(\om)}^{k_n}(f)}{\La_{k_n}(\te^{-k_n}(\om)) h_{\te^{-k_n}(\om)}}  - \int f  d\mu_{\te^{-k_n}(\om)} \right\|_L^\om  & \leq & 2B_\om K_{\te^{-k_n}(\om)} \cdot
 t^n \|f \|^{\te^{-k_n}(\om)}_L.
\end{eqnarray*} 

\noindent\textbf{Remark 2.} \label{ref:precise estimates for K} The advantage in stating the exponential decay of correlations with respect to subsequences is that the constants are explicitly given, which could give rise to a refined analysis of stochastic stability of randomly perturbed intermittent maps as studied, e.g., in \cite{ShenStrien:2013}. Since the contraction in (vi) and (vii) stems from returns to a set with bounded parameters, $c$ can be chosen to be any value bigger than $\hbox{ess-inf}(2B_\om)$, even though the choice affects the sequences $(k_n)$ and $(l_n)$, which are essentially defined through consecutive visits to $\Om_{\hbox{\tiny bp}}$ and a subset of $\Om$ with bounded parameters.
 Furthermore, observe that the random variable $K_\om$ does not depend on the construction of $(k_n)$ and $(l_n)$. Namely, $K_\om$  
only depends on $B_\om$ and $\{h_\om \}$, that is, $ K_\om = 2  \| 1/h_\om \|_\infty \cdot \max\left\{ {\| h_\om \|_\infty }{\| 1/h_\om \|_\infty} -1 , B_\om -1,1 \right\}$ as shown in (\ref{def:k^ast}) below.

\noindent\textbf{Remark 3.} \label{ref:problem_with_proof}
We remark that there is an error in the proof of Theorem 4.1 in \cite{Stadlbauer:2010}
 which states that $\cL_\om^\ast(\mu_{\te\om}) = \la_\om \mu_\om$ and that $P_\om(s)/P_{\te \om}(s) \to \la_\om $ converges almost surely. The proof of the first statement is correct and follows the lines of the proof of Proposition 6.3 in \cite{DenkerKiferStadlbauer:2008}. However, since convergence in the narrow topology does not imply weak convergence almost surely, the proof of the second statement is not correct (see also the corrigendum in \cite{Stadlbauer:2014aa}). However, combining $\la_\om  = \int \cL_\om(1) d\mu_{\te \om}$ with (vii) in the above theorem implies that 
there exists a positive random variable $L$ such that, for a.e. $\om$, 
 \[ \| \cL_\om^{n+1}\varphi(1)/\cL_{\te\om}^{n}\varphi(1) -\la_\om  \|_\infty \leq L_\om s^n,\]
which is a significantly stronger statement than the convergence of $P_\om(s)/P_{\te \om}(s)$ to $\la_\om $. 

\subsection{Application to random positive matrices.} \label{subsec:random positive matrices}
For illustration of the theorem, we give an application to products of random positive matrices. Observe that a positive matrix always can be considered as a Ruelle operator with respect to a potential which is constant on cylinders of length 2. Or in other words, the potential is locally Hölder continuous with index 2 and $\kappa =0$, $B=1$. As in \cite{Stadlbauer:2010}, we consider random matrices
$A = \{A_\om \with \om \in \Om\}$ with  $A_\om=\big(p_{ij}^\om,\, i \in \cW^1_\om,j \in \cW^1_{\te\om}\big) $ and  $p_{ij}\geq 0$ a.s.. We then refer to $A$ as summable random matrix with the b.i.p.-property if
\begin{enumerate}
  \item the signum of $A$ defines a random topological Markov chain with the b.i.p.-property, 
  \item for a.e. $\om \in \te^{-1}(\Om_{\textrm{\tiny bi}} \cup \Om_{\textrm{\tiny bp}})$, 
  we have 
  \[\sup \left\{ {p_{ij}^\om}/{p_{ik}^\om} \with i \in \cW^1_\om,j,k \in \cW^1_{\te\om}, p_{ik}^\om \neq 0 \right\}< \infty,\] 
  \item the random variable $\om \mapsto  \sup_{j \in \cW^1_{\te\om}} |\log \sum_{i \in \cW^1_\om} p_{ij}^\om|$ is in  $L^1(P)$.
\end{enumerate}
As in \cite{Stadlbauer:2010}, it follows that theorem \ref{theo:rpf} is applicable and that the resulting eigenfunction and conformal measure are in fact vectors. Hence, for each summable random matrix $A$ with the b.i.p.-property, there exist a positive random variable $\la$ and strictly positive random vectors $h=\{h^\om \in \bbR^{\ell_\om - 1} \with \om \in \Om\}$ and $\mu=\{\mu^\om \in \bbR^{\ell_{\te\om} - 1} \with \om \in \Om\}$ such that, for a.e. $\om \in \Om$, we have $(h^\om)^t A_\om = \la_\om (h^{\te \om})^t$, $A_\om \mu^{\te \om} = \la_\om \mu^\om $. By applying (vii) of theorem \ref{theo:rpf} to the $i$-th unit vector $e_i := (\delta_{ij}: j \in \cW^1_\om)$, we obtain  
\begin{eqnarray*}
\sup_{ i \in \cW^1_\om, \ j \in \cW^1_{\te^n\om}} \left|  \frac{(A_\om^n)_{ij}}{\La_n(\om) h^{\te^n\om}_j} - \mu^\om_i  \right| 
\leq  c^\ast_\om  K_\om  \cdot s^n,
\end{eqnarray*}
with $\delta_{ij}$ referring to Kronecker's $\delta$ function and  $(\cdot)_{ij}$  to the coordinate $(i,j)$ of a matrix. As it easily can be seen, the above estimate implies exponential convergence of $ \La_n(\om)^{-1} A_\om^n$ in the sup-norm for matrices, that is,  
\[ \|\La_n(\om)^{-1} A_\om^n - \mu^\om \cdot (h^{\te^n \om})^t\|_\infty \leq   c^\ast_\om  K_\om (\sup_j h^{\te^n \om}_j)  s^n. \]
Let $\tilde{A}$ be the random matrix with constants $(\tilde{A}_\om)_{i,j} := ({A_\om})_{i,j} h^\om_i /(\la_\om h^{\te\om}_j)$. Since the matrix corresponds to Ruelle's operator with a potential which is constant on cylinders of length 2, we have that $B_\om=1$ for all $\om \in \Om$. By theorem \ref{theo:rpf-normalized}, it follows that there exists a  positive random vector $\{\nu^\om\}$
with $\sum_{i \in \cW^1_\om} \nu^\om_i =1$ a.s. such that, for all $n \in \bbN$,  
\begin{eqnarray*}
\sup_{ \genfrac{}{}{0pt}{}{i  \in \cW^1_\om,}{\ j \in \cW^1_{{\te^{l_n}(\om)}}}} \left| (\tilde{A}_\om^{l_n(\om)})_{ij} -\nu^\om_i  \right|   \leq   4 \cdot t^n, \; 
\sup_{ \genfrac{}{}{0pt}{}{i \in \cW^1_{\te^{-k_n}(\om)},}{j \in \cW^1_\om }} \left| (\tilde{A}_{\te^{-k_n}(\om)}^{{k_n}(\om)})_{ij} - \nu^{\te^{-k_n}(\om)}_i  \right|  
  \leq   4 \cdot t^n.
\end{eqnarray*} 
To obtain control on the sequences $l_n$ and $k_n$ and the parameter $t$, we have to impose further conditions on $\tilde{A}_\om$. For example, if $(\tilde{A}_\om )_{\mathbf{o}j} \geq C >0$ for all $\om \in \te^{-1}(\Om_{\hbox{\tiny bp} })$ and $j \in \cW^1_{\te \om}$, then $t = 1 - C/2$. Moreover, by setting $r < 1/2$, it follows that $ \lfloor -\log \alpha_\om/ \log r \rfloor =0$. By the proof of corollary \ref{eq:basic-decay-estimate}, the above is satisfied for the following  sequences $(l_n)$ and $(k_n)$: for $\om \in \Om$, set $l_1(\om):= \min\{n \geq 2: \te^n(\om) \in \Om_{\hbox{\tiny bp}}\}$ and by induction, for $n \in \bbN$,  
    \[ l_n(\om) :=  l_{n-1}(\om) + l_1(\te^{l_{n-1}(\om)}(\om)). \]     
Due to contraction only along passages from  $\te^{-1}(\Om_{\hbox{\tiny bp}})$ to $\Om_{\hbox{\tiny bp}}$, there is a slight asymmetry in the construction of $(k_n)$: Set $k_0(\om):= \min\{n \geq 1: \te^{-n}(\om) \in \Om_{\hbox{\tiny bp} }\}$  and $u(\om):= \min\{n \geq 2: \te^{-n}(\om) \in \Om_{\hbox{\tiny bp} }\}$. Then $(k_n)$ is defined by, for $n \in \bbN$,
\[ k_n(\om) := u(\te^{- k_{n-1}(\om)}(\om)) + k_{n-1}(\om). \]
For example, if $\Om_{\hbox{\tiny bp}} = \Om$, then $(l_n)$ and $(k_n)$ are the sequences $(2n: n \in \bbN)$. In this case, an argument as in Application \ref{ref:psi-mixing} below allows to substitute $(2n: n \in \bbN)$ by the sequence $(n: n \in \bbN)$.
 
These results are related to Problem 5.7 in \cite{Orey:1991a}: The transpose of $\tilde{A}_\om$ is a (random) stochastic matrix and conditions (i) and (ii) above imply that the iterates of $\tilde{A}^t_\om$ are contractions of the Wasserstein metric along subsequences which then implies exponential decay. However, this contraction property is a milder condition than $\delta^\ast =0$ in \cite{Orey:1991a}, which essentially means eventual contraction of the bounded variation norm.

\subsection{Application to decay of correlations.}
\label{ref:decay of correlations}
For $(X,T)$ with the b.i.p. property, $\hat{\hbox{H}}$ and $\cL_\om (1)=1$ a.s., 
theorem \ref{theo:rpf-normalized} also allows to deduce the following pathwise 
exponential decay of correlations using the fundamental identity
\[ \int f_\om \cdot g_{\te^n\om}\circ T^n_\om d\mu_\om =  \int \cL_\om^n(f_\om) \cdot  g_{\te^n\om} d\mu_{\te^n\om},\]
for a.e. $\om \in \Om$, $f$ Lipschitz and $g$ with $\int | g_{\te^n\om} |d\mu_{\te^n\om} < \infty$. 
Now assume that $\int f_\om d\mu_\om =0$. Then  
theorem \ref{theo:rpf-normalized}  implies that $ \|\cL_\om^{l_n(\om)}(f_\om)d\mu_\om)\|_\infty$ and 
$ \|\cL_{\te^{-k_n(\om)}\om}^{k_n(\om)}(f_{\te^{-k_n(\om)}\om}) d\mu_{\te^{-k_n(\om)}\om}\|_\infty$ converge to $0$ exponentially fast. Hence, for $f = \{f_\om\}$ Lipschitz continuous with $\int f d\mu_\om =0$ a.s. and  $g = \{g_\om\}$ with $\int |g |d\mu_\om < \infty$ a.s., we obtain the following estimates for the decay of correlations. 
\begin{eqnarray*}
\left| \int f  \cdot  g\circ T^{l_n(\om)}_\om d\mu_\om 
\right|
&\leq& 2 c \cdot t^n  D_\om(f) \int |g| d\mu_{\te^{l_n(\om)}\om},  \\
\left| \int f \cdot  g \circ T^{k_n(\om)}_{\te^{-k_n(\om)}\om} d\mu_{\te^{-k_n(\om)}\om} 
 \right|
&\leq& 4 B_\om \cdot t^n  D_{\te^{-k_n(\om)}\om}(f) \int |g| d\mu_{\om}.  \\ 
\end{eqnarray*}

\subsection{Application to $\psi$-mixing coefficients.}
\label{ref:psi-mixing}
As a corollary of theorem \ref{theo:rpf-normalized}, we obtain a mixing property by specifying the $\psi$-mixing coefficients known from probability theory. In case of a random topological Markov chain, these are defined by
\[ \psi_n(\omega) := \sup \frac{\mu_{\te^{-k}\om}([a] \cap T_{\te^{-k}\om}^{-k - n}(A)) - \mu_{\te^{-k}\om}([a])\mu_{\te^{n}\omega}(A)}{\mu_{\te^{-k}\om}([a]) \mu_{\te^{n}\omega}(A)}, \]
where the supremum is taken with respect to all $k \in \bbN$, $a \in \cW^k_{\te^{-k}\om}$ and  $A \subset X_{\te^{n}\omega}$ measurable such that  $\mu_{\te^{-k}\om}([a]),\mu_{\te^{n}\omega}(A) >0$.  
 Observe that, for $a \in \cW^k$ with $P(\Om_a)>0$ and 
\[f_a^\om(x) := \1_{\te^k(\Om_a))}(\om)  e^{S_k\varphi_{\te^{-k}\om} \circ \tau_a(x)},\]
the estimate (\ref{eq:key}) implies that  $f_a^\om$ is Lipschitz continuous and, in particular, that  
\[  D_\om(f_a^\om) \leq   \frac{B_\om^2}{  \mu_\om (T_{\te^{-k}\om}([a]))}   \mu_{\te^{-k}\om}([a]). \]
Now assume that $(X,T)$ satisfies the b.i.p.-property, $\hat{\hbox{H}}$ and $\cL_\om (1)=1$ a.s.. Then $\{\mu_\om \}$ is a random invariant measure and theorem \ref{theo:rpf-normalized} implies that, for $n \in \bbN$ and $A \subset X_{\te^{l_n(\om)}\omega}$ measurable,
\begin{align*}
  & \left| \mu_{\te^{-k}\om}([a] \cap T_{\te^{-k}\om}^{-k - l_n(\om)}(A)) - \mu_{\te^{-k}\om}([a])\mu_{\te^{l_n(\om)}\omega}(A) \right| \\
 =  &  \left| \int f_a^\om \1_A \circ T_\om^{l_n(\om)} d\mu_\om - \int \mu_{\te^{-k}\om}([a])\1_A \circ T_\om^{l_n(\om)} d\mu_\om \right| \\
 = &  \left| \int  \left(\cL^{l_n(\om)}_\om(f_a^\om) - \mu_{\te^{-k}\om}([a])\right) \1_A d\mu_{\te^{l_n(\om)}\om}  \right|
 \leq    2c  \mu_{\te^{l_n(\om)}\omega}(A)  D_\om(f_a^\om)\\
  \leq  & \frac{2 c B_\om^2}{  \mu_\om (T_{\te^{-k}\om}([a]))} t^n \mu_{\te^{-k}\om}([a]) \mu_{\te^{l_n(\om)}\omega}(A).
\end{align*}
In particular, if $\om \in  \Om_{\textrm{\tiny bi}}$, then $\mu_\om (T_{\te^{-k}\om}([a])$ is bounded from below. Hence, there exists  $\Om' \subset \Om_{\textrm{\tiny bi}}$ of positive measure and $C>0$ such that, for all $\om \in \Om'$, 
\begin{equation}\label{eq:psi-mixing2}  
\psi_{l_n}(\om) \leq C t^n 
\end{equation}
Furthermore, recall that the random sequence $l_n$ is constructed through not necessarily first returns as follows. Let $\Om_{B,C}$ be as in the proof corollary 
 \ref{cor:Markov_Version}, that is $B_\om$ and $C_\om$ (see (\ref{eq:bound from below - main proof})) are uniformly bounded on $\Om_{B,C}$. The sequence $(l_n)$ then has to be chosen such that $\te^{l_n(\om)}(\om) \in \Om_{B,C}$ for all $n \in \bbN$ and such that the difference between two consecutive elements of $(l_n)$ is bounded from below by the random variable $L(\om):= m_\om + n_{\te^{m_\om}(\om)}$ (see lemma \ref{prop:main_estimates}), that is 
 \[ l_n(\om)   - l_{n-1}(\om) \geq L(\te^{l_{n-1}(\om)}(\om)). \]
In the following, $(l_n)$ is specified in case of a random full shift, $B_\om$ uniformly bounded from above and $\phi_\om|_{[\mathbf{o}_\om]}$ uniformly bounded from below. Under these conditions, $K := \hbox{esssup}\{ m_\om + n_{\te^{m_\om}(\om)}\} < \infty$ and, in particular, (\ref{eq:psi-mixing2}) holds for $(l_n) = (Kn + k : n \in \bbN)$, for each $k=0,\ldots K-1$. Hence, we have that, for all $\om \in \Om'$ and with $\tilde{C}:= C/t$ and $\tilde{t} := \sqrt[K]{t}$ and for all $n \geq K$,
\begin{equation}\nonumber \label{eq:psi-mixing3} 
\psi_{n}(\om) \leq \tilde{C} \tilde{t}^n.
\end{equation}

\subsection{Application to equilibrium states.} Under the assumptions of theorem \ref{theo:rpf}, it was shown in  \cite{DenkerKiferStadlbauer:2008} that a variational principle holds. That is, 
\[ P_G(\varphi) = \sup \left\{ h_m^{\hbox{\scriptsize (r)}}(T) + \int \varphi dm_\om dP \with m \in \mathcal{M}_\te(T) \right\},  \]
where $h_m^{\hbox{\tiny (r)}}(T)$ refers to the fiber entropy as defined in \cite{KiferLiu:2006a} and $\mathcal{M}_\te(T)$ to the random invariant probability measures, that is to those random probability measures $m=\{m_\om\}$ such that $m_\om \circ T_\om^{-1}= m_{\te \om} $ a.s.. 
For the random probability measure $\nu = \{\nu_\om\}$ defined by $d\nu_\om = h_\om d\mu_\om$, with $\{h_\om\}$ and  $\{\mu_\om\}$ given by theorem \ref{theo:rpf}, it then follows that 
\[ P_G(\varphi) = h_\nu^{\hbox{\scriptsize (r)}}(T) + \int \varphi_\om d\nu_\om dP, \]
that is, $\{\nu_\om\}$ is an equilibrium state. In order to prove this assertion, recall from  \cite{KiferLiu:2006a} that
\[h_m^{\hbox{\tiny (r)}}(T) = - \lim_{n \to \infty} \sum_{a \in \cW^n_\om} m_\om([a]) \log m_\om([a]). \] 
By the Gibbs property in \cite[Remark 4.2]{Stadlbauer:2010} (as in (\ref{eq:Gibbs-property}) below), there exists a subset $\Om'$ of positive measure of $\Om_{\hbox{\tiny bi}}$ and  $F>0$ such  that $ \mu_{\om}([a]_\om) = F^{\pm 1} \exp (S_n \varphi_\om(x))/\Lambda_n(\om)$ for all $n \in \bbN$ with $\te^n \om \in \Om'$, $a \in \cW^n_\om$ and $x \in [a]_\om$. For $d\nu_\om = h_\om d\mu_\om$, we hence have by 
invariance of $\{\nu_\om\}$, Hölder continuity of $h_\om$ and Birkhoff's theorem that
\begin{eqnarray*}
h_\mu^{\hbox{\scriptsize (r)}}(T) &=&  \lim_{n \to \infty, \te^n \om \in \Om'} - \frac{1}{n} \sum_{a \in \cW^n_\om}  \nu_{\om}([a]_\om)  \log \nu_{\om}([a]_\om) \\
& =&  \lim_{n \to \infty, \te^n \om \in \Om'} - \frac{1}{n} \int_{X_\om} \log h_\om  + S_n \varphi_\om - \log\Lambda_n(\om) d\nu_\om \\
& =&  \lim_{n \to \infty, \te^n \om \in \Om'} \frac{1}{n} \left( \int_{X_\om} \log h_\om  d\nu_\om +  \sum_{k=0}^{n-1} \left(\log  \lambda_{\te^k \om} -  \int_{X_\om} \varphi_{\te^k \om}\circ T_\om^k  d\nu_\om \right) \right)\\
&=& P_G(\varphi) - \int  \varphi_\om d\nu_\om dP.
\end{eqnarray*} 
Hence, $\{\nu_\om\}$ is an equilibrium state for $(X,T)$.

\section{Proofs of theorems \ref{theo:rpf} and \ref{theo:rpf-normalized}} \label{sec:4}

This section is exclusively devoted to the proofs of  theorems \ref{theo:rpf} and \ref{theo:rpf-normalized}. Observe that parts (iii) and (iv) of theorem \ref{theo:rpf} are as in \cite[Th. 4.2]{Stadlbauer:2010} and hence we assume that $\lambda$ and $\{\mu_\om\}$ are given. However, in order to control the regularity of $\{h_\om\}$, we employ a very similar construction to the one in \cite{DenkerKiferStadlbauer:2008}. 

\medskip
\noindent {\em (1) Construction of $h_\om$.}  For $\om \in \Om$ and $k \in \bbN$, let
$ f_{\om,k} := \La_k(\te^{-k}\om)^{-1} \cL^k_{\te^{-k}\om}(1)$. By the same argument as in the proof of proposition \ref{prop:L_leaves_invariant_the_Lipschitz_functions}, we have, for  $x,y \in [a]_{\om}$ for some $a \in \cW^1_{\om}$, that
\[   |f_{\om,k}(x) - f_{\om,k}(y)| \leq  
\frac{\cL_{\te^{-k}\om}^k(1)(y) (B_\om - 1)d_r(x,y)}{\La_k(\te^{-k}\om)} = f_{\om,k}(y)   (B_\om - 1)d_r(x,y).\] 
It follows from this and $\int \log B_\om dP < \infty$, that $\log f_{\om,k}$ is locally H\"older continuous with index 1 with Hölder constant bounded by $B_\om - 1$. Moreover, by remark 4.2 in \cite{Stadlbauer:2010}, the measure $\{\mu_\om\}$ satisfies the following Gibbs property.
That is, for a.e. $\om \in \Om_{\hbox{\tiny bi}}$, $k \in \bbN$ and $a \in \cW^k_{\te^{-k}\om}$ and $x \in [a]_{\te^{-k}\om}$, 
\begin{equation} \label{eq:Gibbs-property}  
  (\La_k(\te^{-k}\om))^{-1} e^{S_k \phi_{\te^{-k}\om}(x)} \leq    B_{\om} 
  (E_\om)^{-1}  \mu_{\te^{-k}\om}([a]_{\te^{-k}\om}),
 \end{equation}
where $E_{\om} :=  \min_{b \in \cI}\mu_\om(T_{\te^{-1}\om}([b]_{\te^{-1}\om}))$ is strictly positive by finiteness of $\cI$. By summing over all cylinders, we obtain that $f_{\om,k}(x) \leq B_{\om} (E_\om)^{-1}$ for a.e.  $\om \in \Om_{\hbox{\tiny bi}}$ and $k \in \bbN$. Moreover, Corollary 4.1 in \cite{Stadlbauer:2010} shows that there exists $\Om^\ast \subset \Om$ of positive measure such that $f_{\om,k}(x)$ is bounded from below whenever $\te^{-k}\om \in \Om^\ast$. Hence, the random function $(h_\om)$ defined by
\[ h_\om(x) := \liminf_{k \to \infty, \te^{-k}\om \in \Om^\ast} f_{\om,k} (x)\]
is bounded from above and below.

\medskip
\noindent {\em (2) Properties of $h_\om$.} 
Observe that Fatou's Lemma implies that $\cL_\om(h_\om) \leq \la_\om h_{\te \om}$. Hence, for $\om \in \Om$ and $n \in \bbN$ with $\te^n \om \in \Om_{\hbox{\tiny bi}}$, we have
\[ \int h_\om d\mu_\om =  \int \La_n(\om)^{-1} \cL^n_\om(h_\om) d\mu_{\te^n\om} \leq \int h_{\te^n\om} d\mu_{\te^n\om}  \leq  B_{{\te^n\om}} (E_{\te^n\om})^{-1}.\]
By ergodicity of $\te$, it  follows that $\int h_\om d\mu_\om \leq \hbox{ess-inf}_{\om \in \Om_{\hbox{\tiny bi}}} B_{\om} (E_{\om})^{-1}$. By the same arguments as in \cite{DenkerKiferStadlbauer:2008}, it then follows again from ergodicity that $\cL_\om(f_\om) = \la_\om f_{\te \om}$. It also follows from the above estimates that $\{\log h_\om \}$ is  locally H\"older continuous with index 1 and Hölder constant $B_\om - 1$ and that $\|h_\om\|_\infty \leq B_{\om} (E_\om)^{-1}$ for a.e. $\om \in \Om_{\hbox{\tiny bi}}$. In order to show that $h_\om$ is bounded from below, note that finiteness of $\cI$ and local H\"older continuity imply that, for a.e. $\om \in \Om_{\textrm{\tiny bi}}$ and $x \in X_\om$, 
\[ \la_{\te^{-1}\om} h_\om(x) = \cL_{\te^{-1}\om}(h_{{\te^{-1}\om}})(x) \geq \min_{a \in \cI} \left(\inf_{y \in [a]_{\te^{-1}\om}} e^{\varphi_{\te^{-1} \om}(y) } h_{\te^{-1}\om}(y)\right) >0. \]
Hence, $\inf h_\om >0$ for a.e. $\om \in \Om_{\textrm{\tiny bi}}$. Combining these upper and lower bounds with  positivity of $\cL_\om$ gives that $0< \inf h_\om \leq \sup h_\om < \infty$ for a.e. $\om \in \Om$. This proves (i) and (ii) of theorem \ref{theo:rpf}.

\medskip
\noindent {\em (3) Normalizing the operator.}  Part (ii) implies that  $\log h_\om(x)$ is locally H\"older with index 1 and, in particular, that $\log h_{\te\om}(T_\om(x))$ is locally  H\"older with index 2.  Moreover, since $h_\om(x)$ is bounded from above and below, $V_1^\om (\log h_{\te\om}\circ T_\om) < \infty$ almost surely. In particular, the normalised potential defined by
\[\tilde{\varphi}_\om (x):=  \varphi_\om(x) + \log h_\om(x) - \log h_{\te\om}(T_\om(x)) - \log \lambda_\om\]
satisfies property  ($\hat{\hbox{H}}$). Furthermore, we have  
\begin{equation}\label{eq:identity_normalized_vs_nonnormalized}
\La_n(\om) h_{\te^n\om} \tilde{\cL}^n_\om(f) =  {\cL}^n_\om(f \cdot h_\om)
\end{equation}
for a.e. $\om$, all $n \in \bbN$ and each bounded and continuous function $f:X_\om \to \bbR$. In particular,  $\tilde{\cL}_\om(1)=1$ and, 
for $\{\nu_\om\}$ defined by $d \nu_\om := h_\om d\mu_\om$, we have $ \tilde{\cL}^\ast_{\om}(\nu_{\te \om})=\nu_\om$.
Now let $t \in (0,1)$, $c >0 $ and $(l_n(\om))$ be given by corollary \ref{cor:Markov_Version}. 
For a fibrewise Lipschitz function $f$ with $D(f)\leq 1$, $n \in \bbN$ and $x \in X_{\te^{l_n}(\om)}$, the Monge-Kantorovich duality and (i) in corollary \ref{cor:Markov_Version} then imply that
\begin{eqnarray} \label{eq:basic-decay-estimate}
 \left| \tilde{\cL}^{l_n}_\om(f)(x)  - \int f d\nu_\om \right| &=&  \left| \int f d (\tilde{\cL}_\om^{l_n})^\ast(\delta_x)  - \int f d (\tilde{\cL}_\om^{l_n})^\ast(\nu_{\te^{l_n}\om}) \right|\\
\nonumber
 & \leq & W((\tilde{\cL}_\om^{l_n})^\ast(\delta_x) , (\tilde{\cL}_\om^{l_n})^\ast(\nu_{\te^{l_n}\om}) ) 
  \leq  c t^n  W(\delta_x , \nu_\om ) \\
\nonumber
  &= & c \cdot t^n \int d(x,y) d \nu_\om(y) \leq ct^n. 
\end{eqnarray} 
Hence, $ \| \tilde{\cL}^{l_n}_\om(f)  - \int f d\nu_\om \|_\infty \leq c t^n$. Combining this with (ii) of corollary \ref{cor:Markov_Version} then proves the first inequality in theorem \ref{theo:rpf-normalized} and the second follows by the same arguments. Finally, the uniqueness of $\{\nu_\om \}$ clearly follows from (i) in corollary \ref{cor:Markov_Version}. This proves theorem \ref{theo:rpf-normalized}. 

\medskip
\noindent {\em (4) Transferring the results.} By (\ref{eq:identity_normalized_vs_nonnormalized}), we have
\begin{equation}\label{eq:identity-normal-vsnonnormal-2}
 \frac{\cL_\om^{n} (f)}{ \La_{n}(\om) h_{\te^{n}(\om)} } - \int f d\mu_\om = 
 \tilde{\cL}_\om^{n} (f/h) - \int (f/h)   d\nu_\om. \end{equation}
Hence, parts (vi) and (vii) in theorem \ref{theo:rpf} can be proved through estimating $D_\om(f/h)$. By dividing the supremum below into $x,y$ according to $d(x,y)=1$ or $d(x,y)<1$ and then applying locally H\"older continuity with index 1 of $\log h$ in the second case, it follows that  
\begin{eqnarray}
\label{def:k^ast} 
 D_\om(f/h) &\leq & \| 1/h_\om \|_\infty D_\om(f) + \|f_\om\|_\infty D(1/h_\om) \\
\nonumber
  &\leq & \| 1/h_\om \|_\infty \left( D_\om(f) + \|f_\om\|_\infty \sup_{x,y \in X_\om} \frac{|h_\om(x)/h_\om(y) -1|}{d(x,y)}   \right)\\
 \nonumber  &\leq &   \|f_\om\|_L  \| {\textstyle\frac{1}{h_\om}} \|_\infty
 \max\left\{ 1, {\| h_\om \|_\infty }{\| {\textstyle\frac{1}{h_\om}} \|_\infty}-1,  
 B_\om -1
  \right\} =:  \|f_\om\|_L K_\om/2
\end{eqnarray}
Combining the estimate with (\ref{eq:identity-normal-vsnonnormal-2}) and theorem \ref{theo:rpf-normalized}  proves (vi) of theorem \ref{theo:rpf}.
We will proceed with the proof of (vii). In order to do so, note that corollary \ref{cor:Absolute_Markov_Version} implies, using  (\ref{eq:basic-decay-estimate}), that 
\begin{eqnarray*}
\left| \tilde{\cL}^{n}_{\om}(f)(x)  - \int f d\nu_{\om} \right| 
 & \leq &
  c^\ast_\om s^n  W(\delta_x , \nu_{\te^{n}\om} ) \leq   c^\ast_\om s^n,   \\
 \left|  \tilde{\cL}^{n}_{\om}(f)(x)  -  \tilde{\cL}^{n}_{\om}(f)(y)  \right| 
 & \leq &
  c^\ast_\om s^n  W(\delta_x , \delta_y ) =  c^\ast_\om s^n  d(x,y),  
 \end{eqnarray*}
 where $f$ satisfies $D(f)\leq 1$. Combining (\ref{eq:identity-normal-vsnonnormal-2}) with the estimate on $D(f/h)$ proves (vii).

\medskip
\noindent {\em (5) Uniqueness.} Hence, in order to prove theorem \ref{theo:rpf}, it remains to deduce the uniqueness of $\{h_\om\}$ and $\{\mu_\om\}$. So assume that $\{\mu^{(i)}_\om\}$ with $i=1,2$ are random probability measures with $\cL_\om^\ast(\mu^{(i)}_{\te\om}) = \la(\om)\mu^{(i)}_\om$. Then $\{h_\om d\mu^{(i)}_\om\}$ are both $\tilde{\cL}_\om$- invariant and hence equal by uniqueness from theorem \ref{theo:rpf-normalized}. The uniqueness of  $\{h_\om d\mu^{(i)}_\om\}$ follows from the same arguments. Hence, also theorem \ref{theo:rpf} is proved.

\subsection*{Acknowledgements} 
The author would like to express his  gratitude to the Erwin Schrödinger Institute in Vienna for warm hospitality and excellent working conditions and the DFG-network ``Skew product dynamics and multifractal analysis'' for financial support: The ideas of proofs emerged while preparing and presenting a mini course at the Workshop on complexity and dimension theory of skew product systems. The author also acknowledges support by \emph{Fundação para Ciência e a Tecnologia} through project PTDC/MAT/120346/2010.

\end{document}